\theoremstyle{plain}
\newtheorem{thm}{Theorem}[section]
\newtheorem{lem}[thm]{Lemma}
\newtheorem{cor}[thm]{Corollary}
\newtheorem{prop}[thm]{Proposition}
\newtheorem{que}[thm]{Question}
\theoremstyle{definition}
\newtheorem{rem}[thm]{Remark}
\DeclareMathOperator{\Aut}{Aut}
\DeclareMathOperator{\Cent}{Cent}
\begin{document} 

\title[On groups with same number of centralizers]{On groups with same number of centralizers} 

\author[S. J. Baishya  ]{Sekhar Jyoti Baishya} 
\address{S. J. Baishya, Department of Mathematics, Pandit Deendayal Upadhyaya Adarsha Mahavidyalaya, Behali, Biswanath-784184, Assam, India.}

\email{sekharnehu@yahoo.com}

\begin{abstract}
In this paper, among other results, we give some sufficient conditions for every non-abelian subgroup of a group to be isoclinic with the group itself. It is also seen that under certain conditions, two groups have same number of element centralizers implies they are isoclinic. We prove that if $G$ is any group having $4, 5, 7$ or $9$ element centralizers and $H$ is any non-abelian subgroup of $G$, then $\mid \Cent(G)\mid=\mid \Cent(H)\mid$ and $ G' \cong  H' \cong C_2, C_3, C_5$ or $C_7$ respectively. Furthermore, it is proved that if $G$ is any group having $n \in \lbrace 4, 5, 6, 7, 9 \rbrace$ element centralizers, then $\mid G' \mid=n-2$.
\end{abstract}

\subjclass[2010]{20D60, 20D99}
\keywords{Finite group, Centralizer, Isoclinic groups}
\maketitle

\section{Introduction} \label{S:intro1}

Given any group $G$, let $\Cent(G)$ and $nacent(G)$ denotes, respectively, the set of centralizers and the set of non-abelian centralizers of elements of $G$. A group $G$ is said to be $n$-centralizer if $\mid \Cent(G)\mid=n$.  In 1994 Belcastro and Sherman  \cite{ctc092} introduced  the notion of $n$-centralizer groups and since then the influence of $\Cent(G)$ on the structure of group have been studied extensively. See \cite{nca, ashrafinew,extcent, con, baishyaF, baishyarima} for recent advances on this and related areas. Perhaps motivated by the impact of  $\mid \Cent(G) \mid$ on the group, Ashrafi and Taeri \cite{taeri} in 2005  asked the following question which was disproved by Zarrin \cite{zarrin094}: Let $G$ and $H$ be finite simple groups. Is it true that if $|\Cent(G)|=|\Cent(H)|$, then $G$ is isomorphic to $H$? Amiri and Rostami \cite{rostami1} in 2015  put forward the following analogue question which was also disproved  by Khoramshahi and Zarrin \cite{con}: Let $G$ and $H$ be finite simple groups. Is it true that if $|nacent(G)|=|nacent(H)|$, then $G$ is isomorphic to $H$? In this context we have the following natural question:

\begin{que} \label{q1}
What can be said about the relationship between two groups if they have the same number of element centralizers. 
\end{que}

It may be mentioned here that if an $n$-centralizer group $G$ is isoclinic with a group $H$, then 
$|\Cent(G)|=|\Cent(H)|$ (see \cite{non, rahul2}). However, the converse is not true in general. For example, if $G$ is a non-abelian group of order $27$, then $|\Cent(G)|=|\Cent(S_3)|=5$, but $G$ and $S_3$ are not isoclinic. The authors in \cite{con} studied and obtained some conditions under which the converse of this statement holds. In this paper, we continue with Question \ref{q1} and improve some earlier results. We obtain some sufficient conditions for every non-abelian subgroup of a group to be isoclinic with the group itself. In particular, it is seen that any non-abelian subgroup of a $4$ or $5$-centralizer group is isoclinic with the group itself, which improves \cite[Theorem 3.5]{con}. It is also proved that any two arbitrary $4$-centralizer groups are isoclinic and any two arbitrary nilpotent $5, 7$ or $9$-centralizer groups are isoclinic. We obtain that if $H$ is any non-abelian subgroup of an $n$-centralizer group $G$, where $n=4, 5, 7$ or $9$, then $|\Cent(G)|=|\Cent(H)|$ and $ G' \cong  H' \cong C_2, C_3, C_5$ or $C_7$ respectively. For any subgroup $H$ of an arbitrary $8$-centralizer group $G$, it is observed that $|\Cent(G)|=|\Cent(H)|$ implies $G$ is isoclinic with $H$. Given any $n$-centralizer group $G$ with $n \in \lbrace 4, 5, 6, 7,  9\rbrace$, we see that $\mid G' \mid=n-2$. A finite group is said to be of conjugate type $(m, 1)$ if every proper element centralizer is of index $m$. For any two finite groups $G$ and $H$ of conjugate type $(p, 1)$, $p$ a prime, it is proved that $|\Cent(G)|=|\Cent(H)|$ implies $G$ is isoclinic with $H$. Among other results, we prove that if $G$ is any finite $(n+2)$-centralizer group of conjugate type $(n, 1)$, then $G$ is a CA-group (i.e., every proper element centralizer of $G$ is abelian) and $\frac{G}{Z(G)}$ is elementary abelian of order $n^2$, which improves \cite[Theorem 3.3]{en09}.

Throughout this paper, for a group $G$,  $Z(G)$ and $G'$ denotes its center and commutator subgroup respectively,  $C_G(x)$  denotes the centralizer of $x \in G$ (however, if there is no confusion in the context then we simply write $C(x)$ in place of $C_G(x)$), $C_n$  denotes the cyclic group of order $n$ and $D_{2n}$ denotes the dihedral group of order $2n$. Some results of this paper holds for finite groups only and we have specifically mentioned it whenever necessary.

\section{Definitions and basic results} \label{S:introd}

We begin with the notion of isoclinism between two groups introduced by P. Hall \cite{hall} in 1940. Two groups $G$ and $H$ are said to be isoclinic if there are two isomorphisms $\varphi : G/Z(G) \longrightarrow H/Z(H)$ and $\phi : G' \longrightarrow H'$ such that if 
\[
\varphi(g_1Z(G))=h_1Z(H) \;\; \text{and} \;\; \varphi(g_2Z(G))=h_2Z(H)
\]
with $g_1, g_2 \in G, h_1, h_2 \in H$, then 
\[
\phi([g_1, g_2])=[h_1, h_2].
\] 

Isoclinism is an equivalence relation weaker than isomorphism having many family invariants. Here we list some of the invariants concerning the element centralizers of two isoclinic groups. 

Recall that a group $G$ is called an F-group if every non-central element centralizer contains no other element centralizer and a CA-group if  all non-central element centralizers are abelian. Finite   groups having exactly two class sizes are called I-groups which are  direct product of an abelian group and a group of prime power order \cite {ito}.  Two elements of a group are said to be $z$-equivalent or in the same $z$-class if their centralizers are conjugate in the group. Being $z$-equivalent is an equivalence relation which is weaker than conjugacy relation. A $z$-equivalence class is called a $z$-class.
In the following result, $\omega (G)$ denotes the size of a maximal set of pairwise non-commuting elements of a group $G$. 

\begin{prop}\label{b439}
If an $n$-centralizer group $G$ is isoclinic with a group $H$, then  
\begin{enumerate}
	\item $\omega (G)=\omega(H)$ (\cite[Lemma 2.1]{non}).
	\item $z$-classes in $G$=$z$-classes in $H$ (\cite[Theorem 2.2]{rahul1}).
	\item   $\mid \Cent(G)\mid=\mid \Cent(H)\mid$ (\cite[Lemma 3.2]{non}, \cite[Theorem A]{rahul2}).
	\item $\mid nacent(G)\mid=\mid nacent(H)\mid$.
	\item   $G$ is a CA-group implies $H$ is also a CA-group.
	\item  $G$ is an F-group implies $H$ is also an F-group.
	\item  $G$ is an I-group implies $H$ is also an I-group  (\cite{hall}, \cite[Proposition 2.2]{ish1}).
\end{enumerate} 
\end{prop}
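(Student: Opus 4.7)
The plan is to derive all seven parts from a single preservation principle: the commuting structure on the central quotient is an isoclinism invariant. Fix isomorphisms $\varphi: G/Z(G) \to H/Z(H)$ and $\phi: G' \to H'$ witnessing the isoclinism. Since $[g_1,g_2]$ depends only on the cosets $g_i Z(G)$, and $\phi$ is injective, one obtains that for any lifts $h_i \in \varphi(g_i Z(G))$, $[g_1, g_2] = 1$ if and only if $[h_1, h_2] = 1$. In particular, the relation ``$x$ commutes with $y$'' descends to a well-defined relation on $G/Z(G)$, and $\varphi$ transports it onto the analogous relation on $H/Z(H)$.

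From this, part (1) follows because any set of pairwise non-commuting elements consists of representatives of distinct $Z(G)$-cosets, and $\varphi$ transports such sets bijectively. Part (3) follows because each centralizer $C_G(g)$ is a union of $Z(G)$-cosets, so the assignment $C_G(g) \mapsto \varphi(C_G(g)/Z(G))$ yields a well-defined bijection $\Psi: \Cent(G) \to \Cent(H)$, with $\Psi(C_G(g)) = C_H(h)$ whenever $\varphi(gZ(G)) = hZ(H)$. Part (2) additionally uses that the inner action of $G$ on itself descends to the conjugation action of $G/Z(G)$ on $G/Z(G)$, which $\varphi$ intertwines with the corresponding action of $H/Z(H)$; since $z$-classes are precisely the orbits of this induced action on $\Cent(G)$ under $\Psi$, they transfer in matching numbers.

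For parts (4), (5), (6), one checks that $\Psi$ preserves the relevant property of centralizers. Non-centrality is immediate: $C_G(g) = G$ iff $gZ(G)$ is trivial, and $\varphi$ sends the trivial coset to the trivial coset. Abelianness of $C_G(g)$ is equivalent to $[x,y]=1$ for all $x,y \in C_G(g)$, a purely coset-theoretic condition, hence transferred by the principle above. This delivers (4) and (5). Inclusions $C_G(g_1) \subseteq C_G(g_2)$ correspond to inclusions of the associated subsets of $G/Z(G)$, which $\varphi$ preserves, giving (6). Finally, (7) is the most delicate step, since being an I-group --- i.e.\ having exactly two conjugacy class sizes --- is not visibly a condition on $G/Z(G)$ alone; one must invoke Ito's structural theorem identifying I-groups as direct products of an abelian group with a group of prime power order, together with the fact that the non-abelian factor is determined up to isoclinism. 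This is the step I expect to be the main obstacle to a fully self-contained argument, and we follow the references \cite{hall}, \cite[Proposition 2.2]{ish1} here.
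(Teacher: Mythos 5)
Your proposal is correct and follows essentially the same route as the paper: both arguments rest on the fact that $\varphi$ induces a correspondence $C_G(g)\mapsto C_H(h)$ (with $\varphi(gZ(G))=hZ(H)$) between centralizers, under which abelianness, inclusions, and conjugacy are preserved because the commutator map factors through $G/Z(G)\times G/Z(G)$ and is transported by $\phi$. The only cosmetic difference is that you reprove parts (a)--(c) from this principle where the paper simply cites the literature, and you verify abelianness of corresponding centralizers directly rather than via Hall's statement that corresponding subgroups are isoclinic; both you and the paper defer part (g) to the cited references.
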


\begin{proof}
d)  Let $\varphi : G/Z(G) \longrightarrow H/Z(H)$  be the isomorphism. Then $\varphi$ induces a bijection between the subgroups of $G$ containing $Z(G)$ and the subgroups of $H$ containing $Z(H)$ and the corresponding subgroups are isoclinic \cite[pp. 134]{hall}.  For any $x \in G$, consider its centralizer $C_G(x)$ which contains $Z(G)$. In view of proof of \cite[Theorem A]{rahul2}, the corresponding subgroup of $H$ containing $Z(H)$ is $C_H(y)$, where $yZ(H)=\varphi(xZ(G))$. Therefore $C_G(x)$ is isoclinic with $C_H(y)$. Hence the result follows.\\

e) It follows from part (d)\\

f) Let $\varphi : G/Z(G) \longrightarrow H/Z(H)$  be the isomorphism. Suppose $H$ is not an F-group. Then $C_H(a) < C_H(b)$ for some $a, b \in H \setminus Z(H)$. Therefore $\frac{C_H(a)}{Z(H)} < \frac{C_H(b)}{Z(H)}$ and consequently, in view of proof of \cite[Theorem A]{rahul2}, we have $\frac{C_G(x)}{Z(G)} < \frac{C_G(y)}{Z(G)}$ for some $x, y \in G \setminus Z(G)$, where $\varphi(\frac{C_G(x)}{Z(G)})=\frac{C_H(a)}{Z(H)}$ and $\varphi(\frac{C_G(y)}{Z(G)})=\frac{C_H(b)}{Z(H)}$. It now follows that $C_G(x) < C_G(y)$, which implies $G$ is not an F-group.
\end{proof}

The following theorems will be used to obtain some of our results. 

\begin{thm}{(p.135 \cite{hall})} \label{ff1}
Every group is isoclinic to a group whose center is contained in the commutator subgroup. 
\end{thm}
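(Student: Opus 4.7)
The plan is to realize a stem group in the isoclinism class of $G$ as a quotient $G/A$ for a carefully chosen central subgroup $A$. The backbone is the following reduction lemma, which I would prove first.

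\textbf{Reduction lemma.} \emph{If $A\le Z(G)$ satisfies $A\cap G'=1$, then $G$ is isoclinic to $G/A$.} The argument is short: if $xA\in Z(G/A)$, then $[x,g]\in A\cap G'=1$ for every $g\in G$, forcing $x\in Z(G)$; hence $Z(G/A)=Z(G)/A$, and so
\[
\frac{G/A}{Z(G/A)}\;=\;\frac{G/A}{Z(G)/A}\;\cong\;\frac{G}{Z(G)},
\]
while $(G/A)'=G'A/A\cong G'/(G'\cap A)=G'$. Under these isomorphisms, the commutator map of $G/A$ corresponds to that of $G$ term by term, which is exactly the condition for isoclinism in the sense of Hall defined just before the statement.

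With the lemma in hand, I would aim to choose $A$ so that $Z(G/A)\le (G/A)'$. Translating back, $Z(G/A)=Z(G)/A$ and $(G/A)'=G'A/A$, so what is needed is $Z(G)\subseteq G'A$ together with $A\cap G'=1$. This is precisely the condition that $A$ be a complement to $Z(G)\cap G'$ inside the abelian group $Z(G)$. If such a complement exists, one simply sets $H:=G/A$; the lemma gives $G$ isoclinic to $H$, and by construction $Z(H)=Z(G)/A$ is carried by the isomorphism $Z(G)/A\cong (Z(G)\cap G')/(A\cap G'\cap Z(G))=Z(G)\cap G'$ into $G'A/A=H'$, which is what we want.

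\textbf{The main obstacle} is the existence of such a complement. Inside the abelian group $Z(G)$, the subgroup $Z(G)\cap G'$ need not split off in general (consider $Z(G)\cong C_4$ with $Z(G)\cap G'\cong C_2$), so one cannot always work inside $G$ itself. My strategy here is to first enlarge $G$ within its isoclinism class before applying the lemma. Specifically, given the pair $(G/Z(G), G')$ together with the commutator pairing---all isoclinism invariants---I would build a central extension
\[
1\longrightarrow M\longrightarrow \widetilde G\longrightarrow G/Z(G)\longrightarrow 1
\]
realizing this pairing, with $M$ chosen large enough (essentially a free abelian extension accommodating the cyclic summands of $Z(G)/(Z(G)\cap G')$) so that in $\widetilde G$ the subgroup $Z(\widetilde G)\cap \widetilde G{}'$ admits a complement in $Z(\widetilde G)$. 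One checks $\widetilde G$ is isoclinic to $G$ by design, and then applies the reduction lemma to $\widetilde G$. The subtlety is only in verifying that the extension can be constructed so the complement exists; for finite groups this is straightforward via the structure theorem for finite abelian groups, and in the general case Hall's original argument on p.~135 of \cite{hall} proceeds along these lines.
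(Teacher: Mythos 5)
The paper offers no proof of this statement --- it is quoted from Hall's 1940 paper with a page reference --- so your argument has to stand on its own. Your reduction lemma is correct: if $A\le Z(G)$ and $A\cap G'=1$ then $Z(G/A)=Z(G)/A$, $(G/A)'\cong G'$, and $G/A$ is isoclinic to $G$; and you are also right that the whole theorem would follow if $Z(G)\cap G'$ had a complement in $Z(G)$, and right again that such a complement need not exist. The problem is that everything after that point is asserted rather than proved, and the assertion carries essentially the entire weight of the theorem. You claim one can build a central extension $\widetilde G$ of $G/Z(G)$, isoclinic to $G$, in which $Z(\widetilde G)\cap \widetilde G'$ does split off, and that "for finite groups this is straightforward via the structure theorem for finite abelian groups." It is not: the structure theorem tells you how an abelian group decomposes, but the task here is to construct a \emph{group} $\widetilde G$ realizing the prescribed pair $(G/Z(G),G')$ and commutator pairing while simultaneously rearranging the center so that $K:=Z\cap G'$ becomes a direct summand. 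That is a statement about which central extension classes in $H^2(G/Z(G),B)$ are realizable with $B$ of a prescribed shape, and it is precisely the content of Hall's theorem on the existence of stem groups; deferring it to "Hall's original argument proceeds along these lines" leaves the proof circular at its core.

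To see that the remaining step is where all the work lies, take $G=M_{16}=\langle a,b\mid a^8=b^2=1,\ bab^{-1}=a^5\rangle$. Here $Z(G)=\langle a^2\rangle\cong C_4$ and $G'=\langle a^4\rangle\cong C_2$ is the unique minimal subgroup of $Z(G)$, so \emph{every} nontrivial central subgroup meets $G'$: your reduction lemma can never fire inside $G$ or any of its quotients, and no amount of taking direct products with abelian groups changes the embedding $K\le Z$. One must genuinely exhibit a different group in the isoclinism class (here $D_8$, or $D_8\times C_2$ where the complement exists) and prove isoclinism to it --- but producing that group from the data of $G$ alone is exactly what the theorem claims. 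Naive attempts (e.g.\ forming $(G\times F)/W$ for a free abelian $F$ mapping onto $Z(G)/K$ and a central subgroup $W$) run into the problem that the natural choices of $W$ either meet $(G\times F)'$ nontrivially, destroying the derived subgroup, or fail to make $K$ split in the new center. So the proposal correctly isolates the key difficulty but does not resolve it; as written it is a reduction of Hall's theorem to a statement of comparable depth, not a proof.
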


\begin{thm}{(\cite{hall}, \cite[Proposition 2.2]{ish1})} \label{ff155}
Let $G$ and $H$ be finite $p$-groups  ($p$ a prime). Suppose $G$ is isoclonic with $H$. Then $G$ and $H$ are groups of the same conjugate type.
\end{thm}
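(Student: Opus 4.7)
The plan is to show that for every $x\in G$ the conjugacy class size $[G:C_G(x)]$ is matched by a conjugacy class size in $H$, and conversely, so that the two groups share the same set of distinct class sizes, which is exactly what it means for them to have the same conjugate type.

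The first observation I would use is the elementary identity
\[
[G:C_G(x)] \;=\; [G/Z(G) : C_G(x)/Z(G)],
\]
valid because $Z(G)\subseteq C_G(x)$. This replaces a class-size computation in $G$ by an index computation inside the quotient $G/Z(G)$, which is precisely the arena where the isoclinism lives: by definition we have an isomorphism $\varphi:G/Z(G)\to H/Z(H)$.

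Next I would invoke the centralizer correspondence that the author has already extracted in the proof of Proposition \ref{b439}(d): for $y\in H$ chosen so that $yZ(H)=\varphi(xZ(G))$, the map $\varphi$ carries the subgroup $C_G(x)/Z(G)$ onto $C_H(y)/Z(H)$. Applying $\varphi$ to the index above therefore yields
\[
[G:C_G(x)] \;=\; [H/Z(H):C_H(y)/Z(H)] \;=\; [H:C_H(y)].
\]
Since $\varphi$ is a bijection, running this over all cosets produces a size-preserving bijection between the non-central conjugacy classes of $G$ and those of $H$; the identity class contributes $1$ on both sides. Hence the sets of distinct class sizes of $G$ and $H$ coincide, so $G$ and $H$ are of the same conjugate type.

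I do not anticipate a genuine obstacle: the only substantive ingredient is the centralizer correspondence $C_G(x)/Z(G) \leftrightarrow C_H(y)/Z(H)$, and this is already in hand from Proposition \ref{b439}(d). The $p$-group hypothesis is not actually used in the argument; its natural role is to ensure that all indices $[G:C_G(x)]$ are powers of $p$, which is the form in which Hall and Ishikawa apply the conclusion elsewhere.
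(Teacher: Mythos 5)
Your argument is correct, and in fact the paper offers no proof to compare it with: Theorem \ref{ff155} is stated there purely as a cited result of Hall and Ishikawa. Both ingredients you use are sound. Since $Z(G)\leq C_G(x)$, one has $[G:C_G(x)]=[G/Z(G):C_G(x)/Z(G)]$; and the isoclinism condition $\phi([g,x])=[h,y]$ (where $\varphi(gZ(G))=hZ(H)$ and $\varphi(xZ(G))=yZ(H)$) shows that $[g,x]=1$ if and only if $[h,y]=1$, so $\varphi$ carries $C_G(x)/Z(G)$ onto $C_H(y)/Z(H)$ and the two indices agree. Since every element of $H$ lies in some coset $yZ(H)$ and all elements of that coset have centralizer $C_H(y)$, running over all cosets shows that the set of centralizer indices of $G$ equals that of $H$, which is precisely Ito's definition of being of the same conjugate type; you are also right that the $p$-group hypothesis is never used. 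The one overstatement is your claim of a ``size-preserving bijection between the non-central conjugacy classes'': what you actually construct is an index-preserving bijection between cosets of the centers, and a coset $xZ(G)$ is generally not a union of conjugacy classes (conjugates of $x$ lie in $xG'$, not in $xZ(G)$), so matching classes bijectively would require an additional counting step. This does not affect the theorem, which only asserts equality of the sets of class sizes, and that follows from what you proved.
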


\begin{thm}{(Theorem 11 \cite{rahul2}, Theorem 3.3 \cite {non})} \label{ff3}
The representatives of the families of isoclinic groups with $n$-centralizers ($n \neq 2, 3$) can be chosen to be finite groups.
\end{thm}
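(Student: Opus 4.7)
The plan is to show that any $n$-centralizer group $G$ (with $n \neq 2, 3$) is isoclinic to a finite group. Applying Theorem \ref{ff1}, I may replace $G$ inside its isoclinism family by a group satisfying $Z(G) \subseteq G'$, and by Proposition \ref{b439}(c) this new representative is still $n$-centralizer. The task therefore reduces to proving: if $G$ is $n$-centralizer and $Z(G) \subseteq G'$, then $G$ is finite.

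The key observation I would use is that in any $n$-centralizer group $G$ one has $\omega(G) \leq n$. Indeed, pairwise non-commuting elements have distinct centralizers: if $C_G(x) = C_G(y)$ then $y \in C_G(y) = C_G(x)$ forces $[x,y] = 1$, a contradiction. Hence any set of pairwise non-commuting elements of $G$ injects into $\Cent(G)$, giving $\omega(G) \leq |\Cent(G)| = n < \infty$.

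I would then invoke B.H.~Neumann's theorem from his solution of a problem of Erd\H{o}s, which asserts that a group $G$ is centre-by-finite precisely when it contains no infinite subset of pairwise non-commuting elements. Applied to our $G$ this yields $|G/Z(G)| < \infty$. Schur's theorem now gives $|G'| < \infty$, and combined with the hypothesis $Z(G) \subseteq G'$ one obtains $|Z(G)| \leq |G'| < \infty$. Hence $|G| = |Z(G)|\cdot|G/Z(G)| < \infty$, as required.

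The main obstacle in this approach is locating the correct infinitary combinatorial input, namely Neumann's theorem linking $\omega(G)$ with $[G : Z(G)]$; once one spots the elementary inequality $\omega(G) \leq |\Cent(G)|$, the rest of the argument reduces to a cascade of standard results (Neumann, then Schur, then the given inclusion $Z(G) \subseteq G'$). The excluded cases $n = 2, 3$ are vacuous because, by a classical observation of Belcastro and Sherman, no group has exactly two or three distinct element centralizers.
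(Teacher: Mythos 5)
Your proof is correct. The paper itself does not prove this theorem but imports it from the cited sources, and your argument is essentially the one used there: reduce via Hall's Theorem \ref{ff1} to a representative with $Z(G)\subseteq G'$, note that pairwise non-commuting elements have distinct centralizers so $\omega(G)\le |\Cent(G)|=n<\infty$, then apply Neumann's theorem to get $[G:Z(G)]<\infty$, Schur's theorem to get $|G'|<\infty$, and conclude $|Z(G)|\le |G'|<\infty$, hence $G$ is finite. All steps check out, including the remark that $n=2,3$ are vacuous by Belcastro--Sherman.
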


For any subgroup $H$ of $G$, it is easy to see that $C_H(x)=C_G(x)\cap H$, for any $x \in H$. This gives the following result:

\begin{lem}\label{b0}
Let $H$ be a subgroup of $G$ such that $H \cap Z(G) \lneq Z(H)$. Then the number of centralizers of $G$ produced by the elements of $H$ is atleast $\mid \Cent(H)\mid+1$.
\end{lem}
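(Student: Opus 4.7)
The plan is to exhibit a surjection from the family $S := \{C_G(x) : x \in H\}$ of $G$-centralizers coming from elements of $H$ onto $\Cent(H)$, and then show this surjection is not injective by producing two distinct preimages of the element $H \in \Cent(H)$. Once that is in place, $|S| \geq |\Cent(H)| + 1$ is immediate.

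First I would set up the surjection. Since $C_H(x) = C_G(x) \cap H$ for every $x \in H$, the assignment $f : C_G(x) \mapsto C_G(x) \cap H$ is a well-defined map $S \to \Cent(H)$, and it is surjective because every centralizer in $H$ is of the form $C_H(x) = C_G(x) \cap H$ for some $x \in H$. This already gives the weak bound $|S| \geq |\Cent(H)|$, so the whole problem is to produce one additional element in $S$ via the hypothesis.

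Next I would exploit $H \cap Z(G) \lneq Z(H)$ to find two elements of $S$ that collide under $f$ at the top element $H \in \Cent(H)$. Taking $x = 1 \in H$ gives $C_G(1) = G \in S$, and trivially $f(G) = G \cap H = H$. On the other hand, the hypothesis furnishes an element $z \in Z(H) \setminus Z(G)$; then $z \in H$, so $C_G(z) \in S$, and since $z \in Z(H)$ we have $C_H(z) = H$, so $f(C_G(z)) = H$ as well. Because $z \notin Z(G)$, the containment $C_G(z) \subsetneq G$ is strict, so $G$ and $C_G(z)$ are two distinct members of $f^{-1}(H) \subseteq S$. Combining this with the surjectivity of $f$ yields $|S| \geq |\Cent(H)| + 1$.

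I do not expect any serious obstacle in this argument; the only point deserving attention is recognising that the hypothesis $H \cap Z(G) \lneq Z(H)$ is tailor-made to supply precisely a central element of $H$ whose $G$-centralizer is strictly smaller than $G$, and that the identity element always supplies $G$ itself as a member of $S$ to pair it against. Everything else is an immediate consequence of $C_H(x) = C_G(x) \cap H$.
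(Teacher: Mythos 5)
Your proof is correct and rests on the same mechanism as the paper's: the restriction $C_G(x)\mapsto C_G(x)\cap H=C_H(x)$ accounts for $\mid\Cent(H)\mid$ centralizers, and the hypothesis supplies $z\in Z(H)\setminus Z(G)$ with $C_G(z)\subsetneq G=C_G(1)$ yet $C_H(z)=H=C_H(1)$, forcing one extra centralizer. The paper phrases this as a count over the partition $H=(H\cap Z(G))\cup(H\setminus(H\cap Z(G)))$ rather than as a non-injective surjection, but the content is identical.
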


\begin{proof}
Clearly, the number of   centralizers of $G$ produced by elements of $H$ is equal to the number of centralizers of $G$ produced by the elements of $H \cap Z(G)$+ the number of centralizers of $G$ produced by the elements of $H \setminus (H \cap Z(G)) \geq 1+\mid \Cent(H)\mid $ (note that elements of $H$ that have the same centralizers in $H$ may have different centralizers in $G$).
\end{proof}

\begin{lem}\label{np101}
Let $G$ be a  finite group and $p$ be a prime. If $G$ has a non-central element of order $p$, then  $\mid \Cent(G)\mid \geq p+2$.  
\end{lem}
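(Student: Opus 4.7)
The plan is to write down $p+2$ explicit distinct element centralizers. Let $x$ be a non-central element of order $p$. Since $x \notin Z(G)$, fix some $y \in G$ with $xy \ne yx$. My candidate list of centralizers is
\[
G,\;\; C_G(x),\;\; C_G(y),\;\; C_G(xy),\;\; C_G(x^2 y),\;\; \ldots,\;\; C_G(x^{p-1}y),
\]
giving $p+2$ subgroups. It will suffice to show that they are pairwise distinct.

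The key underlying observation, which I would establish first, is that $x$ fails to commute with any $x^i y$: commuting $x$ past $x^i$ makes this equivalent to $xy = yx$, which is forbidden. From this one fact, most of the required inequalities are immediate: $C_G(x) \ne C_G(x^i y)$ because $x$ lies in the former but not the latter; $G \ne C_G(x^i y)$ because $x^i y \notin Z(G)$; and $G \ne C_G(x)$ by the hypothesis that $x$ is non-central.

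The main step, which I expect to be the only slightly delicate one, is showing that the $p$ centralizers $C_G(x^i y)$ are pairwise distinct as $i$ ranges over $\{0,1,\ldots,p-1\}$. To this end I would assume $C_G(x^i y) = C_G(x^j y)$ for some $i \ne j$ in this range, use the tautological membership $x^j y \in C_G(x^j y)$ to conclude that $x^i y$ and $x^j y$ commute, and then simplify $(x^i y)(x^j y) = (x^j y)(x^i y)$ down to the statement that $x^{i-j}$ commutes with $y$. Since $p$ is prime and $0 < |i-j| < p$, the element $x^{i-j}$ generates $\langle x \rangle$, forcing $x$ itself to commute with $y$, contradicting the choice of $y$. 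This is the one place where the primality of $p$ genuinely enters, and it is exactly the obstacle that the cyclic structure of $\langle x \rangle$ is designed to overcome; everything else is bookkeeping.
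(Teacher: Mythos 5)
Your proof is correct and takes essentially the same route as the paper's: the paper exhibits the pairwise non-commuting set $\lbrace x, a, ax, ax^2, \dots, ax^{p-1}\rbrace$ of size $p+1$ (forcing $p+1$ distinct proper centralizers, plus $G$ itself), and your distinctness check for the $C_G(x^iy)$ is the identical computation, reducing to $y$ centralizing $x^{i-j}$ and then using primality of $p$ to conclude $y$ centralizes $x$.
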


\begin{proof}
Let $x \in G \setminus Z(G)$ be an element of order $p$. Let $a \in G \setminus C(x)$.  Clearly, $ax^i \in G \setminus C(x)$ for any $i$. Consider the set $X=\lbrace x, a, ax, ax^2, \dots, ax^{p-1} \rbrace$. Observe that if $ax^iax^j=ax^jax^i$ for some $0 \leq i < j \leq p-1$, then $a \in C(x^{j-i})=C(x)$, a contradiction (noting that $gcd((j-i), o(x))=1$). Therefore $X$ is a set of pairwise non-commuting elements of $G$ and $\mid X \mid=p+1$. Hence $\mid \Cent(G)\mid \geq  p+2$. 
\end{proof}

  For any finite group $G$, the author in \cite[Lemma 3.1]{en09} proved that if $G' \cap Z(G)=\lbrace 1 \rbrace$, then  $\mid \Cent(G)\mid= \mid \Cent(\frac{G}{Z(G)})\mid$. However, for any arbitrary $n$-centralizer group we have the following general result:

\begin{prop}\label{b1}
Let $G$ be any $n$-centralizer group and $N\unlhd G$.  If $N \cap G'=\lbrace 1 \rbrace$, then  $\mid \Cent(G)\mid= \mid \Cent(\frac{G}{N})\mid$.   
\end{prop}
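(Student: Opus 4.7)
The plan is to exhibit an explicit bijection $\Phi\colon \Cent(G) \to \Cent(G/N)$ sending $C_G(x)$ to $C_{G/N}(xN)$, and then to check that this map is well-defined, surjective, and injective. The foundational observation is that the hypothesis $N \cap G' = \{1\}$ forces every commutator landing in $N$ to be trivial: since $[x,y] \in G'$ always, we have $[x,y] \in N \iff [x,y] = 1$. Rewriting the left side, this gives the key equivalence that $xN$ commutes with $yN$ in $G/N$ if and only if $x$ commutes with $y$ in $G$.

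A direct consequence is the identity $C_{G/N}(xN) = C_G(x)N/N$, which immediately makes $\Phi$ well-defined (the right-hand side depends only on the set $C_G(x)$) and surjective (every centralizer in $G/N$ has the form $C_{G/N}(zN)$ for some $z \in G$, which is $\Phi(C_G(z))$).

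The main (though modest) obstacle is injectivity: starting from $C_G(x)N = C_G(y)N$, one must recover $C_G(x) = C_G(y)$ as subsets of $G$, not merely modulo $N$. The plan is to take an arbitrary $a \in C_G(x)$ and write $a = bn$ with $b \in C_G(y)$ and $n \in N$. The standard commutator identity $[bn, y] = [b,y]^n\,[n,y]$ reduces (using $[b,y]=1$) to $[a,y] = [n,y]$. Normality of $N$ places $[n,y] \in N$, and being a commutator it also lies in $G'$, so $[n,y] \in N \cap G' = \{1\}$. Hence $a \in C_G(y)$, giving $C_G(x) \subseteq C_G(y)$, and symmetry closes the argument.

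Putting these three steps together, $\Phi$ is a bijection between $\Cent(G)$ and $\Cent(G/N)$, so both sets have the same cardinality. I expect no serious difficulty beyond the commutator manipulation in the injectivity step, whose sole subtlety is recognizing the roles of normality of $N$ and the hypothesis $N \cap G' = \{1\}$ as precisely what is needed to kill $[n,y]$.
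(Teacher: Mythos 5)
Your proof is correct, but it takes a genuinely different route from the paper. The paper disposes of this in one line: it cites Hall's result (p.~134 of \cite{hall}) that under these hypotheses $G$ is isoclinic with $G/N$, and then invokes Proposition~\ref{b439}(c), the invariance of $\mid\Cent(\cdot)\mid$ under isoclinism. You instead construct the bijection $C_G(x)\mapsto C_{G/N}(xN)=C_G(x)N/N$ by hand, which makes the argument self-contained and elementary at the cost of some commutator bookkeeping. Both are valid; your version has the virtue of not depending on the isoclinism machinery, and it makes explicit \emph{why} the centralizer structure survives the quotient. One remark that would streamline your injectivity step (and which is the real content behind Hall's claim): the hypotheses already force $N\leq Z(G)$, since for any $g\in G$ and $n\in N$ normality gives $[g,n]\in N$ while always $[g,n]\in G'$, so $[g,n]\in N\cap G'=\lbrace 1\rbrace$. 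Once $N$ is central, every $C_G(x)$ contains $N$, the identity $C_{G/N}(xN)=C_G(x)/N$ is immediate, and injectivity follows from the correspondence theorem with no commutator identity needed. Your $[bn,y]=[b,y]^n[n,y]$ computation is just this observation applied pointwise, so there is no gap --- only a slightly longer path to the same place.
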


\begin{proof}
In view of \cite[pp. 134]{hall} and Proposition \ref{b439} we have the result.
\end{proof}

In response to a question  raised by  Belcastro and Sherman \cite{ctc092}, namely, whether there exists an $n (\neq 2, 3)$-centralizer group, Ashrafi showed \cite[Proposition 2.1]{en09} that there exists $n$-centralizer groups for $n \neq 2, 3$. In this connection, we have the following result which implies we can say something more than Ashrafi's result. It also improves \cite[Proposition 2.2]{ed09} and \cite[Lemma 2]{actc09}. Furthermore, it improves  \cite[Example 16]{nca}, namely, there exists a $2^r$-centralizer CA-group for every $r>1$. In the following result $ C_n {\rtimes}_\theta C_p$ denotes semidirect product of $C_n$ and $C_p$, where $\theta : C_p \longrightarrow \Aut(C_n)$ is a homomorphism.

\begin{prop}\label{bcc1}
Given any group $G$, suppose $\frac{G}{Z(G)}$ be non-abelian and $p$ be a prime. If $\frac{G}{Z(G)} \cong C_n {\rtimes}_\theta C_p$,
then $G$ is an $(n+2)$-centralizer CA-group.
\end{prop}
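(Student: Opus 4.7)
The approach is to analyze the subgroup $A \leq G$ defined as the preimage of $C_n \leq G/Z(G)$ under the canonical projection. Since $A/Z(G) \cong C_n$ is cyclic and $Z(G) \leq Z(A)$, the group $A$ is abelian; moreover $A$ is normal in $G$ with $[G:A]=p$. First I would show that for every $a \in A \setminus Z(G)$ one has $C_G(a)=A$: indeed $A \leq C_G(a) \lneq G$, and since $[G:A]=p$ is prime, $C_G(a) \in \{A,G\}$, while the latter option would place $a$ in $Z(G)$. Hence every noncentral element of $A$ produces the single centralizer $A$.

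Next, fix $b \in G$ whose image generates the $C_p$-factor of $G/Z(G)$, and take any $g = b^j \alpha \in G \setminus A$ with $1 \leq j \leq p-1$, $\alpha \in A$. Since $A$ is abelian, $C_A(g)=C_A(b^j)$. If some $a \in C_A(b^j) \setminus Z(G)$ existed, then $b^j$ would lie in $C_G(a)=A$, contradicting $j \not\equiv 0 \pmod p$. Thus $C_G(g) \cap A = Z(G)$. Because $C_G(g)A/A$ is a nontrivial subgroup of $G/A \cong C_p$, it equals $C_p$, giving $|C_G(g)| = p|Z(G)|$. In particular $g^p \in C_G(g)\cap A = Z(G)$, so $\bar g := gZ(G)$ has order exactly $p$ in $G/Z(G)$; consequently $\langle g, Z(G)\rangle$ is an abelian subgroup of order $p|Z(G)| = |C_G(g)|$ contained in $C_G(g)$, forcing $C_G(g) = \langle g, Z(G)\rangle$.

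The count then follows by a coset calculation: two elements $g, g' \in G \setminus A$ yield the same centralizer iff $\langle g, Z(G)\rangle = \langle g', Z(G)\rangle$, iff $g'$ lies in the set $\langle g, Z(G)\rangle \setminus A$ of cardinality $(p-1)|Z(G)|$. Partitioning $|G \setminus A| = (p-1)\cdot n|Z(G)|$ by this equivalence produces exactly $n$ such centralizers. Together with $G$ itself and $A$, this yields $|\Cent(G)| = n+2$, and every proper centralizer is abelian (being either $A$ or some $\langle g, Z(G)\rangle$), so $G$ is a CA-group.

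The main technical step I anticipate is establishing that $\bar g$ has order exactly $p$ for every $\bar g \in (G/Z(G)) \setminus C_n$. For an abstract group of the form $C_n \rtimes_\theta C_p$ this need not hold, but in our setting it is forced automatically by the hypothesis via the identification $C_G(a)=A$ for noncentral $a \in A$, which precludes any noncentral element of $A$ from centralizing $b^j$. Once this is in place, the rest is routine coset bookkeeping.
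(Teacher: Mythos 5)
Your argument is correct, and it takes a genuinely different route from the paper. The paper's proof is a two-line reduction: by Theorem \ref{ff3} one may replace $G$ by an isoclinic finite group, and then Proposition \ref{b439} together with cited results from \cite{baishya} finishes the job. You instead compute every centralizer explicitly and from first principles: the preimage $A$ of $C_n$ is abelian of index $p$, so $C_G(a)=A$ for all $a\in A\setminus Z(G)$ (no intermediate subgroups over an index-$p$ subgroup); this forces $C_G(g)\cap A=Z(G)$ for $g\notin A$, hence $g^p\in Z(G)$ and $C_G(g)=\langle g\rangle Z(G)$ of index $n$; the coset count then yields exactly $n$ such centralizers, giving $\mid\Cent(G)\mid=n+2$ with all proper centralizers abelian. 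What your approach buys is self-containedness (no appeal to isoclinism invariance or to \cite{baishya}) and more information: you exhibit the actual centralizer lattice, and you correctly identify and prove the nonobvious structural fact that every element of $(G/Z(G))\setminus C_n$ must have order $p$ --- a constraint that is \emph{derived} from capability of $G/Z(G)$ rather than assumed, and which rules out quotients such as the modular group of order $p^3$. One minor repair: since the proposition is stated for arbitrary (possibly infinite-centered) groups, the final bookkeeping should not be phrased via $\mid G\setminus A\mid=(p-1)n\mid Z(G)\mid$; instead partition the $(p-1)n$ nontrivial cosets of $Z(G)$ outside $A$ (equivalently, count the $n$ subgroups of order $p$ of $G/Z(G)$ not contained in $C_n$), each centralizer class consisting of $p-1$ such cosets. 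With that rephrasing the proof is complete and, unlike the paper's, needs no reduction to the finite case.
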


\begin{proof}
In view of Theorem \ref{ff3}, $G$ is isoclinic with a finite group. Now, the result follows using Proposition \ref{b439} and \cite[Proposition 2.9 and Lemma 2.10]{baishya}.
\end{proof}

Recall that the generalized quaternion group $Q_{4m}$ has the presentation $\langle a, b \mid a^{2m}=1, b^2=a^m, bab^{-1}=a^{-1} \rangle, m \geq 2 $.

\begin{cor}\label{bcor1}
There exists $n$-centralizer CA-groups for $n \geq 4$.
\end{cor}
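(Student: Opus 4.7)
The plan is to split on the size of $n$: for $n \geq 5$ I would apply Proposition \ref{bcc1} to a suitable dihedral group, while $n = 4$ requires direct verification, since $C_2 \rtimes_\theta C_p$ is never non-abelian and so Proposition \ref{bcc1} cannot supply the example at $n = 4$.

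For $n \geq 5$, I would set $m = n - 2 \geq 3$ and take $G = D_{4m} = \langle a, b \mid a^{2m} = b^2 = 1,\, bab^{-1} = a^{-1}\rangle$. A short verification shows $Z(G) = \langle a^m\rangle$ has order $2$, so that $G/Z(G)$ has order $2m$ and is isomorphic to $D_{2m} \cong C_m \rtimes_\theta C_2$ with $\theta$ the inversion action. Since $m \geq 3$, this quotient is non-abelian, and Proposition \ref{bcc1} (applied with its ``$n$'' taken to be $m$ and $p = 2$) then yields that $G$ is an $(m+2) = n$-centralizer CA-group.

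For the boundary case $n = 4$, I would exhibit $D_8$ directly. Enumerating the centralizers of the eight elements produces exactly four distinct subgroups---namely $D_8$ itself, the cyclic subgroup $\langle a\rangle$, and the two Klein four subgroups $\{1, a^2, b, a^2 b\}$ and $\{1, a^2, ab, a^3 b\}$---each of the three non-central ones being abelian, so $D_8$ is a $4$-centralizer CA-group. The only real obstacle is noticing that $n = 4$ falls outside the scope of Proposition \ref{bcc1}, and this is easily overcome by hand; the dihedral family $\{D_{4(n-2)}\}_{n \geq 5}$ then provides a clean uniform construction for every remaining value of $n$.
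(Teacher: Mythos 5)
Your proof is correct and is essentially the paper's argument with a different witness family: the paper applies Proposition \ref{bcc1} to the generalized quaternion groups $Q_{4(n-2)}$, whose central quotient is $D_{2(n-2)}$, while you apply it to the dihedral groups $D_{4(n-2)}$, whose central quotient is the same $D_{2(n-2)}$. The one substantive difference is your separate treatment of $n=4$, and your caution there is actually well placed: at $n=4$ the quotient $D_{2(n-2)}=D_4\cong C_2\times C_2$ is abelian, so the hypothesis of Proposition \ref{bcc1} (that $G/Z(G)$ be non-abelian) fails, and the paper's own one-line invocation of that proposition for $Q_8$ is, strictly speaking, outside its stated scope. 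Your direct enumeration of the four centralizers of $D_8$ (or, equally, of $Q_8$) cleanly plugs that boundary case, so your version is marginally more careful than the paper's; both yield the same conclusion for all $n\geq 4$.
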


\begin{proof}
In view of Proposition \ref{bcc1}, $Q_{4(n-2)}, n \geq 4$ is an $n$-centralizer CA-group by noting that $\frac{Q_{4(n-2)}}{Z(Q_{4(n-2)})} \cong D_{2(n-2)}$ for any $n \geq 4$.
\end{proof}

\begin{rem}\label{rem111}
Let $p$ be a prime. A finite $p$-group $G$ is said to be a special $p$-group of rank $k$ if $G'=Z(G)$ is elementary abelian of order $p^k$ and $\frac{G}{G'}$ is elementary abelian. Furthermore, a finite group $G$ is extraspecial if $G$ is a special $p$-group and $\mid G' \mid=\mid Z(G) \mid=p$.  It is well known that every extraspecial $p$-group has order $p^{2a+1}$ for some positive integer $a$. Furthermore, for  every prime $p$ and every positive integer $a$, there exists, upto isomorphism, exactly two extraspecial groups of order $p^{2a+1}$. Moreover, any two extraspecial groups of same order are isoclinic (see \cite[pp. 7]{lewis}). Again, if $G$ is any group and $A$ is an abelian group, then $G$ and $G \times A$ are isoclinic (see \cite[pp. 135]{hall}).
 \end{rem}

In this context we have the following result.

\begin{prop}\label{bcor2}
There exists $2^{2n}$-centralizer F-groups which are not CA-groups for $n>1$.
\end{prop}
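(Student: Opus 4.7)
The plan is to produce the examples explicitly as extraspecial $2$-groups. For each $n>1$, let $G=G_{n}$ be an extraspecial $2$-group of order $2^{2n+1}$ (for instance, the central product of $n$ copies of $D_{8}$). By Remark~\ref{rem111} any two such groups are isoclinic, and Proposition~\ref{b439}(c),(e),(f) tells us that $|\Cent(G)|$, the CA-property and the F-property are all isoclinism invariants, so it suffices to verify the three required conclusions on a single concrete representative of each $G_n$.

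The first thing to check is that $|\Cent(G)|=2^{2n}$. One has $Z(G)=G'\cong C_{2}$, and $\overline{G}:=G/Z(G)$ is elementary abelian of rank $2n$. The commutator descends to a nondegenerate alternating bilinear form
\[
\overline{G}\times\overline{G}\longrightarrow Z(G)\cong\mathbb{F}_{2},
\]
and for each $x\in G\setminus Z(G)$ one has $C_{G}(x)/Z(G)=\overline{x}^{\perp}$, a hyperplane of $\overline{G}$. In characteristic $2$ each nonzero $\overline{x}$ is its own (singleton) line, and nondegeneracy of the form forces distinct $\overline{x}$ to determine distinct hyperplanes. Hence there are exactly $2^{2n}-1$ non-central centralizers; adding $G$ itself gives $|\Cent(G)|=2^{2n}$. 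The F-property follows immediately because every non-central centralizer has index $2$ in $G$, so the non-central centralizers form an antichain under inclusion.

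To rule out the CA-property, fix any $x\in G\setminus Z(G)$. The restriction of the commutator form to $\overline{x}^{\perp}$ has radical $\langle\overline{x}\rangle$, so it descends to a nondegenerate alternating form on $\overline{x}^{\perp}/\langle\overline{x}\rangle$, a vector space of dimension $2n-2$. For $n\geq 2$ this dimension is at least $2$, so the induced form is nonzero; lifting, one obtains $y,z\in C_{G}(x)$ with $[y,z]\neq 1$, showing that $C_{G}(x)$ is non-abelian.

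There is no serious obstacle here: the whole argument is the standard symplectic dictionary for extraspecial $2$-groups, and the key reason the construction works is that in characteristic $2$ the nonzero vectors of $\overline{G}$ correspond bijectively to the lines, so the hyperplane count directly produces a power of $2$. The only subtlety worth stating carefully is the passage from ``nondegenerate form on a space of dimension $\geq 2$'' to ``the form is nonzero,'' which is what forces $n>1$ in the hypothesis.
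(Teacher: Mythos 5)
Your proof is correct, and it uses exactly the same witness groups as the paper: the extraspecial $2$-groups of order $2^{2n+1}$. The only difference is one of presentation --- the paper disposes of the verification by citing two earlier results (\cite[Proposition 2.26]{baishyarima} for the centralizer count and \cite[Proposition 3.13]{baishyaF} for the F/CA properties), whereas you carry out the verification from scratch via the nondegenerate alternating form on $G/Z(G)$ over $\mathbb{F}_2$. Your computation is sound: the bijection between nonzero vectors of $G/Z(G)$ and the hyperplanes $\overline{x}^{\perp}$ gives $2^{2n}-1$ proper centralizers, all of index $2$ (hence the F-property), and the radical computation $\overline{x}^{\perp}\cap\langle\overline{x}\rangle^{\perp\perp}=\langle\overline{x}\rangle$ correctly isolates why $n>1$ is needed to defeat the CA-property. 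The isoclinism-invariance preamble in your first paragraph is harmless but unnecessary, since you verify everything on a concrete representative anyway.
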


\begin{proof}
Let $G$ be an extraspecial group of order $2^{2n+1}, n>1$. Then in view of \cite[Proposition 2.26]{baishyarima} and \cite[Proposition 3.13]{baishyaF}, $G$ is an $2^{2n}$-centralizer F-group which is not a CA-group.
\end{proof}

\section{Main results} \label{S:main}

The following key result helps in determining whether a given group is CA or not.

\begin{prop}\label{b011}
An arbitrary group $G$ is a CA-group if and only if  $Z(H)=Z(G) \cap H$ for any non-abelian subgroup $H$ of $G$. In particular,  $ \frac{H}{Z(H)}= \frac{H}{Z(G) \cap H} \cong \frac{HZ(G)}{Z(G)} \leq \frac{G}{Z(G)}$ for any non-abelian subgroup $H$ of a CA-group $G$.
\end{prop}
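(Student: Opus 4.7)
The plan is to prove both implications directly from the definitions, exploiting the basic fact that $x$ always lies in the center of $C_G(x)$.

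For the forward direction, I would start by noting the trivial containment $Z(G) \cap H \subseteq Z(H)$, valid for any subgroup. For the reverse containment, let $x \in Z(H)$. Then every element of $H$ commutes with $x$, so $H \subseteq C_G(x)$. Now argue by contradiction: if $x \notin Z(G)$, then $C_G(x)$ is a proper (non-central) element centralizer, hence abelian by the CA-hypothesis; but then $H \subseteq C_G(x)$ forces $H$ to be abelian, contradicting the choice of $H$. Therefore $x \in Z(G) \cap H$, completing this direction.

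For the converse, suppose $Z(H) = Z(G) \cap H$ for every non-abelian subgroup $H$, and pick any $x \in G \setminus Z(G)$. I need to show $C_G(x)$ is abelian. Suppose not, and set $H = C_G(x)$; this is a non-abelian subgroup of $G$ containing $x$ in its center (since $x$ commutes with everything in $C_G(x)$). By hypothesis, $Z(H) = Z(G) \cap H$, so $x \in Z(G)$, contradicting the choice of $x$. Hence $C_G(x)$ is abelian, and $G$ is a CA-group.

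For the ``in particular'' clause, assume $G$ is CA and $H \leq G$ is non-abelian. The first equality $\frac{H}{Z(H)} = \frac{H}{Z(G) \cap H}$ is immediate from the main statement. The isomorphism $\frac{H}{Z(G) \cap H} \cong \frac{HZ(G)}{Z(G)}$ is then a direct application of the second (diamond) isomorphism theorem, noting that $Z(G) \trianglelefteq HZ(G)$. The final containment $\frac{HZ(G)}{Z(G)} \leq \frac{G}{Z(G)}$ is clear since $HZ(G) \leq G$.

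There is no real obstacle here; the only subtle point is ensuring that $C_G(x)$ in the converse direction is actually non-abelian (so that the hypothesis applies) \emph{before} deriving the contradiction, which is precisely the assumption being negated. The argument is short and essentially formal once one observes the pivotal interaction $H \subseteq C_G(x) \iff x \in Z(H) \cap \text{(something containing } H\text{)}$.
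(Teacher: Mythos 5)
Your proof is correct, and both directions rest on the same one\hyphen step use of the CA definition that the paper employs; the converse and the ``in particular'' clause are handled identically. The only (cosmetic) difference is in the forward direction: the paper picks two non\hyphen commuting elements $a,b\in H$ and identifies $Z(H)$ with $C_H(a)\cap C_H(b)=Z(G)\cap H$ using that $C_G(a)$ and $C_G(b)$ are abelian, whereas you take $x\in Z(H)$ and observe that $H\subseteq C_G(x)$ forces $C_G(x)$ (hence $H$) to be abelian unless $x\in Z(G)$ --- an equally valid and arguably slightly more direct route to the same containment.
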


\begin{proof}
Let $G$ be a CA-group and $H$ be a non-abelian subgroup of $G$. If $a, b \in H$ be such that $ab \neq ba$, then $a, b \in H \setminus ( Z(G) \cup Z(H))$. It is easy to verify that $Z(H)= C_H(a) \cap C_H(b)=C_G(a) \cap H \cap C_G(b) \cap H=Z(G) \cap H$. Conversely, if $Z(H)=Z(G) \cap H$ for any non-abelian subgroup $H$ of $G$, then $G$ is a CA-group. For if 
$C_G(x)$ is non-abelian for some $x \in G \setminus Z(G)$, then $C_G(x) \cap Z(G)=Z(G) \subsetneq Z(C_G(x))$, which is a contradiction. Last part is trivial.
\end{proof}

\begin{cor}\label{b3}
If $ Z(G') = \lbrace 1 \rbrace$ for any CA-group $G$, then  $G$ is isoclinic with $\frac{G}{Z(G)}$. In particular, if $G$ is $n$-centralizer, then $\frac{G}{Z(G)}$ is also an $n$-centralizer CA-group.
\end{cor}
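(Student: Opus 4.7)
The plan is to reduce the whole statement to the isoclinism between $G$ and $G/N$ when $N \cap G' = \{1\}$ (which is essentially Hall's observation on p.\ 134 of \cite{hall}, used already in the proof of Proposition~\ref{b1}). So the task is just to produce a normal subgroup $N$ of $G$ that meets $G'$ trivially and such that $G/N \cong G/Z(G)$; the natural candidate is $N = Z(G)$.

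First I would record the elementary containment
\[
G' \cap Z(G) \subseteq Z(G').
\]
Indeed, any $x \in G' \cap Z(G)$ commutes with every element of $G$, hence in particular with every element of $G'$, so $x \in Z(G')$. Combined with the hypothesis $Z(G')=\{1\}$ this forces $G' \cap Z(G) = \{1\}$.

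With $N = Z(G)$ one therefore has $N \cap G' = \{1\}$, so the standard isoclinism fact cited in the proof of Proposition~\ref{b1} (Hall \cite[p.~134]{hall}) immediately gives that $G$ is isoclinic with $G/Z(G)$, which is the main assertion. For the ``in particular'' part, assuming $G$ is $n$-centralizer: Proposition~\ref{b439}(c) yields $|\Cent(G/Z(G))| = |\Cent(G)| = n$, and Proposition~\ref{b439}(e) transfers the CA-property from $G$ to $G/Z(G)$, completing the corollary.

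There is essentially no obstacle here; the only content to verify is the inclusion $G'\cap Z(G)\subseteq Z(G')$, after which the conclusion is a direct application of Propositions~\ref{b1} and \ref{b439}. The slightly delicate point worth double-checking is that the CA hypothesis on $G$ is not actually needed in the isoclinism step itself (it enters only in the ``in particular'' conclusion via Proposition~\ref{b439}(e)); the CA assumption is nonetheless natural in the intended applications that follow.
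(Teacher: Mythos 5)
Your proof is correct and follows essentially the same route as the paper: establish $G'\cap Z(G)=\{1\}$, invoke Hall's observation (p.~134) that $N\cap G'=\{1\}$ makes $G$ isoclinic with $G/N$, and then transfer $|\Cent(\cdot)|$ and the CA-property via Proposition~\ref{b439}. The only (minor, and welcome) difference is that you obtain $G'\cap Z(G)=\{1\}$ from the elementary containment $G'\cap Z(G)\subseteq Z(G')$ rather than from Proposition~\ref{b011} applied to $H=G'$, which correctly exposes that the CA hypothesis is not needed for the isoclinism assertion itself.
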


\begin{proof}
Using Proposition \ref{b011}, \cite[pp. 134]{hall} and Proposition \ref{b439} we have the result.
\end{proof}

As an application of Proposition \ref{b011}, we also have the following result.

\begin{prop}\label{p2}
If  $H$ is a non-abelian subgroup of $G$ with $\frac{G}{Z(G)} \cong D_{2n}$, then
\begin{enumerate}
	\item $G$ is an $(n+2)$-centralizer CA-group.
	\item $\frac{H}{Z(H)} \cong D_{2n/d}$ for some divisor $d$ of $n$.
	\item $\mid \Cent(G)\mid=\mid \Cent(H)\mid$ implies $G$ is isoclinic with $H$.
	\end{enumerate} 
\end{prop}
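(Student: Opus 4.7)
The plan is to treat the three parts in order, using Propositions \ref{bcc1} and \ref{b011} as the central tools.

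For (a), observe that $D_{2n} \cong C_n \rtimes_\theta C_2$ with $\theta$ acting by inversion, so Proposition \ref{bcc1} (with $p = 2$) immediately gives that $G$ is an $(n+2)$-centralizer CA-group.

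For (b), since $G$ is a CA-group by (a), Proposition \ref{b011} yields $H/Z(H) \cong HZ(G)/Z(G) \leq G/Z(G) \cong D_{2n}$. As $H$ is non-abelian, $H/Z(H)$ is non-cyclic, and the non-cyclic subgroups of $D_{2n}$ are exactly the dihedral subgroups of order $2m$ for divisors $m$ of $n$ with $m \geq 2$. Writing $m = n/d$ gives $H/Z(H) \cong D_{2n/d}$ for some divisor $d$ of $n$.

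For (c), first compute $|\Cent(H)| = n/d + 2$: when $n/d \geq 3$ this is (a) applied to $H$; when $n/d = 2$ (so $H/Z(H) \cong V_4$) a direct count of the three non-central centralizers plus the center gives the same formula. Combined with $|\Cent(G)| = n+2$ and the hypothesis $|\Cent(G)| = |\Cent(H)|$, this forces $d = 1$, hence $H/Z(H) \cong G/Z(G) \cong D_{2n}$. Now the subgroup $HZ(G)/Z(G)$ of the finite group $G/Z(G) \cong D_{2n}$ is itself isomorphic to $D_{2n}$, so it must equal $G/Z(G)$, giving $G = HZ(G)$. It follows that $G' = [HZ(G), HZ(G)] = [H,H] = H'$. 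The isoclinism is then realized by $\varphi : G/Z(G) \to H/Z(H)$ defined by $hz \cdot Z(G) \mapsto h \cdot Z(H)$ (well-defined since $Z(H) = Z(G) \cap H$), together with the identity map $\phi : G' \to H'$; compatibility with commutators is automatic from $[h_1 z_1, h_2 z_2] = [h_1, h_2]$.

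The main technical point is establishing $G = HZ(G)$, which rests on the elementary fact that a subgroup of a finite group isomorphic to the whole group must coincide with it (applied to the finite quotient $G/Z(G) \cong D_{2n}$). Once this is in hand, the isoclinism follows cleanly from the CA hypothesis and the second isomorphism theorem.
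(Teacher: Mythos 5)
Your proof is correct and follows essentially the same route as the paper: Proposition \ref{bcc1} for (a), Proposition \ref{b011} plus the classification of subgroups of $D_{2n}$ for (b), and the count $|\Cent(H)|=n/d+2$ forcing $d=1$ and $G=HZ(G)$ for (c). The only difference is that you unpack two citations the paper relies on --- Conrad's description of dihedral subgroups and Lescot's lemma that $G=HZ(G)$ implies isoclinism --- into explicit arguments, and you handle the $H/Z(H)\cong C_2\times C_2$ case separately, which is a point the paper's appeal to part (a) quietly glosses over.
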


\begin{proof}

a) It follows from Proposition \ref{bcc1}.\\

b) By part (a)   $G$ is a CA-group  and consequently, using Proposition \ref{b011} we have $ \frac{H}{Z(H)}= \frac{H}{Z(G) \cap H} \cong \frac{HZ(G)}{Z(G)} \leq \frac{G}{Z(G)}$. Now,  using \cite[Theorem 3.1]{conrad} we have the result.\\

(c) In view of  part (b),  $\frac{H}{Z(H)} \cong \frac{HZ(G)}{Z(G)} \cong D_{2n/d}$ for $d \mid n$. Now, if $\mid \Cent(H)\mid=n+2$, then by part (a), we have $HZ(G)=G$ and  consequently $G$ is isoclinic with $H$ by \cite[Lemma 2.7]{pL95}.
\end{proof}

Let $H$ be a subgroup of $G$. The author in \cite[Lemma 2.7]{pL95} proved that if $G=HZ(G)$, then $G$ and $H$ are isoclinic, and if $H$ is finite then the converse is also true. We have the following general result for an arbitrary $n$-centralizer group.

\begin{prop}\label{bp4}
Let $G$ be any $n$-centralizer group and $H \leq G$. Then  $G=HZ(G)$ iff $G$ is isoclinic with $H$.
\end{prop}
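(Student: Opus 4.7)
The plan is to handle the two directions separately. The ``only if'' direction is immediate from \cite[Lemma 2.7]{pL95}, quoted in the paragraph preceding the statement: whenever $G = HZ(G)$, the groups $G$ and $H$ are isoclinic, and this implication requires no finiteness hypothesis.

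For the ``if'' direction, suppose $G$ is isoclinic with $H$. The isoclinism supplies an isomorphism $\varphi : G/Z(G) \longrightarrow H/Z(H)$. The crucial preliminary observation is that $G/Z(G)$ is finite: since $G$ is $n$-centralizer, Theorem \ref{ff3} shows that $G$ is isoclinic to some finite group $G_0$, and then $G/Z(G) \cong G_0/Z(G_0)$ is finite. Consequently $H/Z(H)$ is also finite and
\[
|G/Z(G)| = |H/Z(H)|.
\]

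Next I would exploit the inclusion $H \leq G$. Every element of $H \cap Z(G)$ commutes with every element of $H$, so $H \cap Z(G) \leq Z(H)$, whence $H/Z(H)$ is a quotient of $H/(H \cap Z(G))$. The second isomorphism theorem gives
\[
H/(H \cap Z(G)) \;\cong\; HZ(G)/Z(G) \;\leq\; G/Z(G).
\]
Combining these relations with the cardinality equality above yields
\[
|G/Z(G)| = |H/Z(H)| \leq |H/(H \cap Z(G))| = |HZ(G)/Z(G)| \leq |G/Z(G)|,
\]
so equality holds throughout. Since $HZ(G)/Z(G)$ is a finite subgroup of $G/Z(G)$ of the same finite order, $HZ(G) = G$.

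The main obstacle is precisely the fact that in \cite[Lemma 2.7]{pL95} the converse requires $H$ to be finite; an abstract isoclinism only gives an \textit{isomorphism} $G/Z(G) \cong H/Z(H)$, which in an infinite setting need not force $HZ(G) = G$. The $n$-centralizer hypothesis is what closes this gap: via Theorem \ref{ff3} it reduces $G/Z(G)$ (and hence $H/Z(H)$) to a finite group, after which the cardinality squeeze is automatic and no separate finiteness assumption on $H$ is needed.
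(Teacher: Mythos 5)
Your proof is correct, but it reaches the conclusion by a different route than the paper. The paper's argument for the ``if'' direction runs through the centralizer-counting machinery: isoclinism gives $|\Cent(G)|=|\Cent(H)|$ by Proposition \ref{b439}, then Lemma \ref{b0} forces $Z(H)=H\cap Z(G)$ (otherwise the elements of $H$ would already produce more than $|\Cent(H)|=|\Cent(G)|$ centralizers in $G$), and finally $HZ(G)/Z(G)\cong H/Z(H)\cong G/Z(G)$ yields $HZ(G)=G$. You instead bypass Proposition \ref{b439} and Lemma \ref{b0} entirely and run a cardinality squeeze on central quotients, with $Z(H)=H\cap Z(G)$ dropping out as a byproduct of the equality case rather than being established first. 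Your version has two advantages: it is more self-contained, and it makes explicit the point the paper leaves tacit, namely that passing from ``$HZ(G)/Z(G)$ is a subgroup of $G/Z(G)$ isomorphic to $G/Z(G)$'' to ``$HZ(G)=G$'' genuinely requires the finiteness of $G/Z(G)$, which you correctly extract from the $n$-centralizer hypothesis via Theorem \ref{ff3}. The paper's version, in turn, fits the template it reuses for Proposition \ref{b4} and keeps the centralizer-count invariants in the foreground. (One cosmetic remark: Theorem \ref{ff3} excludes $n=2,3$, but no such groups exist, and the abelian case $n=1$ is trivial, so your appeal to it is harmless.)
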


\begin{proof}
If $G=HZ(G)$, then by  \cite[Lemma 2.7]{pL95}, we have $G$ is isoclinic with $H$.  Conversely, if $G$ is isoclinic with $H$, then  $\mid \Cent(G)\mid=\mid \Cent(H)\mid$ by Proposition \ref{b439}. Therefore using Lemma \ref{b0}, we have $Z(H)=H \cap Z(G)$, which implies  $\frac{H}{Z(H)}= \frac{H}{Z(G) \cap H} = \frac{HZ(G)}{Z(G)} \cong \frac{G}{Z(G)}$ and thus $G=HZ(G)$.
\end{proof}

Using arguments similar to Proposition \ref{bp4}, we also have the following result:

\begin{prop}\label{b4}
Let $G$ be any $n$-centralizer group and $H \leq G$ be such that $\frac{G}{Z(G)} \cong \frac{H}{Z(H)}$. Then  $\mid \Cent(G)\mid=\mid \Cent(H)\mid$ iff $G$ is isoclinic with $H$.
\end{prop}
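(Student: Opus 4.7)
The plan is to mimic the proof of Proposition~\ref{bp4}, inserting the hypothesis $G/Z(G) \cong H/Z(H)$ at the place where the converse direction of \cite[Lemma~2.7]{pL95} was used there. The $(\Leftarrow)$ direction is immediate from Proposition~\ref{b439}(c): isoclinism of $G$ with $H$ already forces $\mid\Cent(G)\mid=\mid\Cent(H)\mid$, and nothing further is required.

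For the $(\Rightarrow)$ direction, suppose $\mid\Cent(G)\mid=\mid\Cent(H)\mid=n$. The number of distinct centralizers of $G$ contributed by elements of $H$ is trivially at most $n$. If one had $H\cap Z(G)\lneq Z(H)$, then Lemma~\ref{b0} would force this count to be at least $\mid\Cent(H)\mid+1=n+1$, a contradiction. Since the reverse containment $H\cap Z(G)\leq Z(H)$ is automatic, we deduce $Z(H)=H\cap Z(G)$. Hence
$$
\frac{H}{Z(H)}\;=\;\frac{H}{H\cap Z(G)}\;\cong\;\frac{HZ(G)}{Z(G)}\;\leq\;\frac{G}{Z(G)}.
$$
Bringing in the standing hypothesis $H/Z(H)\cong G/Z(G)$, the subgroup $HZ(G)/Z(G)$ of $G/Z(G)$ is isomorphic to $G/Z(G)$ itself. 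Because $G$ is $n$-centralizer, $G/Z(G)$ is finite (the finitely many centralizers $C(x_i)$ contain $Z(G)$ and cover $G$), so this isomorphic inclusion must be an equality, giving $G=HZ(G)$. A single application of \cite[Lemma~2.7]{pL95} now delivers the isoclinism of $G$ with $H$.

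The main obstacle—such as it is—lies in the bookkeeping step of inferring $HZ(G)=G$ from the abstract isomorphism $HZ(G)/Z(G)\cong G/Z(G)$ together with the natural inclusion; this is the only place one needs the (standard) finiteness of $G/Z(G)$ for an $n$-centralizer group, and it is precisely the subtlety tacitly present already in the proof of Proposition~\ref{bp4}. Every other step is a verbatim reuse of that earlier argument, with the hypothesis $G/Z(G)\cong H/Z(H)$ taking over the structural role that isoclinism played there.
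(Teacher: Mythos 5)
Your proof is correct and follows exactly the route the paper intends: the paper gives no separate proof for this proposition, stating only that it follows ``using arguments similar to Proposition~\ref{bp4}'', and your argument is precisely that adaptation (Lemma~\ref{b0} to get $Z(H)=H\cap Z(G)$, the second isomorphism theorem to embed $H/Z(H)$ in $G/Z(G)$, finiteness of $G/Z(G)$ to conclude $G=HZ(G)$, and then \cite[Lemma~2.7]{pL95}). Your explicit remark that finiteness of $G/Z(G)$ is needed to pass from the abstract isomorphism to the equality $HZ(G)=G$ is a point the paper leaves tacit, but it is the standard fact for $n$-centralizer groups and your treatment is fine.
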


For groups $G_1$ and $G_2$  in \cite[pp.56]{ed09}, we have $\frac{G_1}{Z(G_1)} \cong \frac{G_2}{Z(G_2)} \cong C_2 \times C_2 \times C_2$, $\mid \Cent(G_1)\mid = 6 $ and $ \mid \Cent(G_2)\mid=8$; which implies $G_1$ and $G_2$ are not isoclinic by Proposition \ref{b439}.  However, for some special situations we have the following result:

\begin{prop}\label{bs4}
Let $G$ be any $n$-centralizer CA-group and $H \leq G$. Then $\frac{G}{Z(G)} \cong \frac{H}{Z(H)}$ iff $G$ is isoclinic with $H$. In particular, $\mid \Cent(G)\mid=\mid \Cent(H)\mid$.
\end{prop}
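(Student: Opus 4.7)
My plan is to prove the two implications separately; only the ``only if'' direction carries any real substance.

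The ``if'' direction is immediate: by definition of isoclinism, $G$ being isoclinic with $H$ supplies an isomorphism $\varphi : G/Z(G) \longrightarrow H/Z(H)$, and simultaneously gives $|\Cent(G)| = |\Cent(H)|$ via Proposition \ref{b439}(c), taking care of the ``in particular'' clause as well.

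For the converse, I would first dispose of the trivial abelian case. If $H$ is abelian, then $H/Z(H)$ is trivial, so the hypothesis forces $G/Z(G)$ to be trivial, making $G$ abelian; then $G$ and $H$ are automatically isoclinic (both have trivial central quotient and trivial commutator subgroup). So assume $H$ is non-abelian. Since $G$ is a CA-group, Proposition \ref{b011} gives $Z(H) = Z(G) \cap H$ together with the embedding
\[
\frac{H}{Z(H)} \;\cong\; \frac{HZ(G)}{Z(G)} \;\leq\; \frac{G}{Z(G)}.
\]

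The decisive step is to upgrade this embedding to an equality $HZ(G) = G$, after which Proposition \ref{bp4} concludes that $G$ is isoclinic with $H$. For this I need $G/Z(G)$ to be finite, and Theorem \ref{ff3} delivers it: since $G$ is $n$-centralizer, it is isoclinic to a finite group, and isoclinism preserves the central quotient up to isomorphism, so $G/Z(G)$ is finite. Finiteness, combined with the hypothesis $G/Z(G) \cong H/Z(H)$ and the embedding above, forces $HZ(G)/Z(G) = G/Z(G)$, i.e.\ $HZ(G) = G$, which closes the argument.

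The only potentially delicate point is securing finiteness of $G/Z(G)$ in order to promote a self-embedding to an equality; Theorem \ref{ff3} resolves this cleanly, so I do not anticipate a genuine obstacle.
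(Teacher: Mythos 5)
Your proposal is correct and follows essentially the same route as the paper: Proposition \ref{b011} gives $Z(H)=Z(G)\cap H$, the hypothesis then identifies $G/Z(G)$ with the subgroup $HZ(G)/Z(G)$, forcing $HZ(G)=G$, and Lescot's lemma (via Proposition \ref{bp4}) finishes. Your explicit appeal to Theorem \ref{ff3} to secure finiteness of $G/Z(G)$ before promoting the self-embedding to an equality, and your separate treatment of the abelian case, are points the paper leaves implicit, so your write-up is if anything slightly more careful.
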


\begin{proof}
In view of Proposition \ref{b011},  $Z(H)=Z(G) \cap H$ and so $\frac{G}{Z(G)} \cong \frac{H}{Z(H)}= \frac{H}{Z(G) \cap H} = \frac{HZ(G)}{Z(G)}$. Therefore $G=HZ(G)$, and hence $G$ is isoclinic with $H$ by \cite[Lemma 2.7]{pL95}. Converse is trivial. Last part follows from Proposition \ref{b439}.
\end{proof}

\begin{thm}\label{b57}
If  $H$ is a non-abelian subgroup of $G$ with $\mid \frac{G}{Z(G)}\mid=pq$ ($p \leq q$ are primes), then  $G$ is isoclinic with $H$. In particular, $\mid \Cent(G)\mid=\mid \Cent(H)\mid=q+2$.
\end{thm}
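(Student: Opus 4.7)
The strategy is to use the hypothesis to force $G/Z(G)$ into a very restricted shape, conclude that $G$ is a CA-group, and then deduce $HZ(G)=G$ so that \cite[Lemma 2.7]{pL95} (equivalently Proposition \ref{bp4}) applies. Since $H$ is non-abelian, so is $G$, and hence $G/Z(G)$ is non-cyclic of order $pq$. This splits into two cases: when $p<q$, every abelian group of order $pq$ is cyclic, so $G/Z(G)$ must be non-abelian of the form $C_q \rtimes C_p$ (which requires $p \mid q-1$); when $p=q$, the non-cyclic groups of order $p^2$ are exhausted by $C_p \times C_p$.

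In the case $p<q$, Proposition \ref{bcc1} applies directly with $n=q$ and gives that $G$ is a $(q+2)$-centralizer CA-group. In the case $p=q$, Proposition \ref{bcc1} does not apply verbatim since its hypothesis requires $G/Z(G)$ to be non-abelian, so I will have to verify the conclusion by hand. The key observation is that for any $g \in G \setminus Z(G)$, the subgroup $C_G(g)/Z(G)$ of $C_p \times C_p$ is proper and contains the nontrivial element $gZ(G)$, hence equals $\langle gZ(G)\rangle$ of order $p$. This yields exactly $p+1$ proper centralizers, one for each subgroup of order $p$ in $G/Z(G)$, and each is abelian because it has cyclic quotient by $Z(G)$; together with $G$ itself this gives $\mid \Cent(G)\mid=p+2=q+2$ and shows $G$ is a CA-group.

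Once $G$ is known to be CA, Proposition \ref{b011} gives $Z(H)=Z(G)\cap H$, so $H/Z(H)\cong HZ(G)/Z(G)\leq G/Z(G)$. Since $H$ is non-abelian, $H/Z(H)$ is non-cyclic, and the only non-cyclic subgroup of a group of order $pq$ is itself (proper subgroups have prime or trivial order and are therefore cyclic). Hence $HZ(G)=G$, and \cite[Lemma 2.7]{pL95} yields that $G$ is isoclinic with $H$; Proposition \ref{b439}(c) then delivers $\mid \Cent(G)\mid=\mid \Cent(H)\mid=q+2$. The main obstacle I expect is the degenerate case $p=q$, where Proposition \ref{bcc1} cannot be invoked and one must establish the CA property and centralizer count by a direct cyclic-quotient argument; everything else follows mechanically from the structure theorems already assembled in the paper.
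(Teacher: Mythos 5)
Your proposal is correct and follows essentially the same route as the paper: establish that $G$ is a CA-group, use Proposition \ref{b011} to embed $H/Z(H)$ as a non-cyclic (hence improper) subgroup of $G/Z(G)$, conclude $HZ(G)=G$, and apply Lescot's lemma. The only difference is cosmetic: where the paper asserts the CA property and cites an external corollary for the count $q+2$, you verify both by a direct case analysis on $p<q$ versus $p=q$, which makes the argument more self-contained but does not change its substance.
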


\begin{proof}
In view of  Proposition \ref{b011}, $\frac{H}{Z(H)}=\frac{H}{Z(G) \cap H} \cong \frac{HZ(G)}{Z(G)} \leq \frac{G}{Z(G)}$ and so $HZ(G)=G$ by noting that $G$ is a CA-group. Now, using  \cite[Lemma 2.7]{pL95}, $G$ is isoclinic with $H$. Last part follows using Theorem \ref{ff3}, Proposition \ref{b439} and \cite[Corollary 2.5]{baishya}.
\end{proof}

The following result will be used in the next theorem.

\begin{prop}\label{CG17}
Let $G$ be a finite group such that $\frac{G}{Z(G)}=\frac{K}{Z(G)} \rtimes \frac{H}{Z(G)}$ is a Frobenius group with $K$ and $H$ abelian. Then $\mid \Cent(G)\mid= \mid G' \mid+2$. 
\end{prop}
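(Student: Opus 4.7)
The plan is to compute $|\Cent(G)|$ and $|G'|$ separately from the Frobenius structure on $\bar G:=G/Z(G)=\bar K\rtimes\bar H$, where $\bar K:=K/Z(G)$ and $\bar H:=H/Z(G)$, and then to verify that both quantities differ by exactly $2$.

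First I would count centralizers. The Frobenius property gives $C_{\bar G}(\bar x)\le\bar K$ for every $\bar x\in\bar K\setminus\{\bar 1\}$, and $C_{\bar G}(\bar y)$ lies inside the unique conjugate $g\bar H g^{-1}$ containing $\bar y$ for every $\bar y\in\bar G\setminus\bar K$. Lifting these inclusions through $G\to\bar G$ and using that both $K$ and each conjugate $gHg^{-1}$ are abelian (so they are already contained in the centralizers of their own elements), I obtain $C_G(x)=K$ for $x\in K\setminus Z(G)$, and $C_G(x)=gHg^{-1}$ for $x\in G\setminus K$. Since complements are self-normalizing in a Frobenius group, $N_G(H)=H$, so $H$ has exactly $[G:H]=|\bar K|$ distinct conjugates. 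Adding $G$ itself (from $Z(G)$), $K$, and these $|\bar K|$ conjugates yields $|\Cent(G)|=|\bar K|+2$.

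Next I would compute $|G'|$. The crucial preliminary observation is that $\bar H$, being an abelian Frobenius complement, must be cyclic: all Sylow subgroups of a Frobenius complement are cyclic or generalized quaternion, and abelianness rules out the latter, so the abelian $\bar H$ has all Sylow subgroups cyclic and is therefore itself cyclic. Pick $c\in H$ with $\bar c$ generating $\bar H$, so $H=\langle c\rangle Z(G)$. As $G=KH$ with $K,H$ abelian and $K\lhd G$, $G'=[K,H]$. Viewing $K$ additively as a $\mathbb{Z}[\langle c\rangle]$-module and using the factorization $c^i-1=(c-1)(1+c+\cdots+c^{i-1})$, every commutator $[k,c^i]=(c^i-1)k$ rewrites as $(c-1)k'=[k',c]$ for $k':=(1+c+\cdots+c^{i-1})k\in K$; combined with $[k,c^iz]=[k,c^i]$ for $z\in Z(G)$, this proves $G'=\phi_c(K)$, where $\phi_c(k):=[k,c]$.

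Finally, $\phi_c:K\to K$ is a group homomorphism (since $K$ is abelian), and its kernel equals $C_K(c)$; the fixed-point-freeness of $\bar c$ on $\bar K$ forces $C_K(c)=Z(G)$. Hence $|G'|=|\phi_c(K)|=|K|/|Z(G)|=|\bar K|$, and combining with the centralizer count gives $|\Cent(G)|=|\bar K|+2=|G'|+2$. The main obstacle I anticipate is realizing that $\bar H$ must be cyclic: without this reduction one cannot collapse $[K,H]$ to commutators with the single element $c$, and identifying $G'$ would require a more delicate analysis of $K$ as a $\mathbb{Z}[\bar H]$-module (in particular, showing $G'\cap Z(G)=1$ would not be nearly as transparent).
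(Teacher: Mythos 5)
Your proof is correct. It follows the same skeleton as the paper's argument --- both reduce everything to the single quantity $\mid K/Z(G)\mid$, showing separately that $\mid \Cent(G)\mid=\mid K/Z(G)\mid+2$ and $\mid G'\mid=\mid K/Z(G)\mid$ --- but where the paper outsources both halves to citations, you prove them from scratch. For the second half the paper notes that $G/K\cong H/Z(G)$ is cyclic (same cyclicity-of-abelian-Frobenius-complements observation you make) and then invokes Isaacs' Lemma 12.12, which for an abelian normal subgroup $K$ with $G/K$ cyclic gives $\mid K\mid=\mid G'\mid\,\mid K\cap Z(G)\mid$; your computation with the homomorphism $\phi_c(k)=[k,c]$, whose image is $G'$ and whose kernel is $C_K(c)=Z(G)$, is precisely the proof of that lemma specialized to this situation, with fixed-point-freeness supplying the kernel computation. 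For the first half the paper cites a proposition of Amiri--Madadi--Rostami asserting $\mid\Cent(G)\mid=\mid K/Z(G)\mid+2$ in this configuration, whereas you derive it directly from the Frobenius partition of $\bar G$ together with abelianness of $K$ and of the conjugates of $H$, and self-normalization of the complement. The net effect is that your write-up is self-contained and makes visible exactly where each hypothesis (abelianness of $K$ and $H$, fixed-point-freeness) is used, at the cost of being longer; the paper's version is shorter but leans on two external results.
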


\begin{proof}
Using the third isomorphic theorem, we get $\frac{G}{K} \cong \frac{H}{Z(G)}$. Consequently,   we have $K$ is an abelian normal subgroup of $G$ such that $\frac{G}{K}$ is cyclic. In the present scenario, in view of \cite[Lemma 12.12]{isaacs}, $\mid K \mid=\mid G' \mid \mid K \cap Z(G) \mid$ which forces $\mid \frac{K}{Z(G)}\mid=\mid G' \mid$. Therefore by \cite[Proposition 3.1]{amiri2019}, $\mid \Cent(G)\mid= \mid G' \mid+2$.
\end{proof}

In the following result, which improves \cite[Theorem 3.5]{con}, $(C_m, C_n)$ denotes the Frobenius group with complement $C_m$ and kernel $C_n$.

\begin{thm}\label{c1}
Let $H$ be a non-abelian subgroup of an $n$-centralizer group $G$.

\begin{enumerate}
	\item If $n=4$ or $5$, then $G$ is isoclinic with $H$.
	\item If $n=4, 5, 7$ or $9$, then $\mid \Cent(G)\mid=\mid \Cent(H)\mid$ and $ G' \cong  H' \cong C_2, C_3, C_5$ or $C_7$ respectively.
	\item If $n=8$, then $\mid \Cent(G)\mid=\mid \Cent(H)\mid$ implies $G$ is isoclinic with $H$.
\end{enumerate}
\end{thm}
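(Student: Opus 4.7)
The plan is to treat the three parts in turn, leveraging the classifications of small-$n$ centralizer groups together with the companion fact (established separately in the paper) that $|G'|=n-2$ whenever $n\in\{4,5,7,9\}$.

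For part (a), every $4$- or $5$-centralizer group $G$ satisfies $|G/Z(G)|\in\{4,6,9\}$, each a product of at most two primes: $G/Z(G)\cong C_2\times C_2$ when $n=4$, and $G/Z(G)\cong S_3$ or $C_3\times C_3$ when $n=5$. Theorem~\ref{b57} then immediately yields that $G$ is isoclinic with every non-abelian subgroup $H$.

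For part (b), the commutator claim is the easy half: since $|G'|=n-2$ is a prime $p\in\{2,3,5,7\}$, we have $G'\cong C_p$, and for any non-abelian $H\leq G$ the derived subgroup $H'$ is a nontrivial subgroup of the cyclic group $G'$ of prime order, forcing $H'=G'\cong C_{n-2}$. The centralizer equality for $n=4,5$ is immediate from part (a) via Proposition~\ref{b439}(c). For $n=7$ or $9$ I would proceed by a case analysis on the isomorphism type of $G/Z(G)$: by Propositions~\ref{bcc1} and~\ref{CG17} the possibilities are the elementary abelian $C_{n-2}\times C_{n-2}$ or a Frobenius group $C_{n-2}\rtimes C_m$ with $m\mid(n-3)$ and $m>1$ (the dihedral $D_{2(n-2)}$ being the subcase $m=2$). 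In the elementary abelian case, first verify that $G$ is a CA-group of class $2$; then Proposition~\ref{b011} forces the non-cyclic quotient $H/Z(H)$ to equal $C_{n-2}\times C_{n-2}$, and Proposition~\ref{bs4} gives $|\Cent(G)|=|\Cent(H)|$. In the dihedral Frobenius subcase, Proposition~\ref{p2}(b) gives $H/Z(H)\cong D_{2(n-2)/d}$ for some $d\mid(n-2)$; since $n-2$ is prime and $H$ is non-abelian we must have $d=1$, and Proposition~\ref{p2}(a) applied to $H$ delivers $|\Cent(H)|=n$. In the remaining Frobenius subcases, after confirming $G$ is CA (using that the kernel and complement of $G/Z(G)$ are abelian), Proposition~\ref{b011} embeds $H/Z(H)$ into $G/Z(G)$, and the finitely many non-abelian subgroups of the latter are handled by re-applying the dihedral, elementary-abelian, or Frobenius arguments to $H$, each yielding $|\Cent(H)|=(n-2)+2=n$ via Propositions~\ref{p2}(a), \ref{bs4}, or~\ref{CG17}.

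For part (c), suppose $|\Cent(H)|=|\Cent(G)|=8$. By Lemma~\ref{b0}, if $H\cap Z(G)\subsetneq Z(H)$ then the elements of $H$ would produce at least $|\Cent(H)|+1=9$ distinct centralizers of $G$, contradicting $|\Cent(G)|=8$; hence $Z(H)=H\cap Z(G)$ and $H/Z(H)\hookrightarrow G/Z(G)$. Appealing to the classification of $8$-centralizer groups (where $G/Z(G)$ lies in a short list, with $D_{12}$ as the principal case), each proper non-abelian subgroup $K$ of $G/Z(G)$ is ruled out as a candidate for $H/Z(H)$: by Theorem~\ref{b57} or the argument of part (b), such a $K$ would give $|\Cent(H)|\in\{4,5,6,7\}$, contradicting $|\Cent(H)|=8$. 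Therefore $HZ(G)=G$, and Proposition~\ref{bp4} concludes that $G$ is isoclinic with $H$. The main obstacle is the case analysis in part (b) for $n=7,9$, particularly the Frobenius subcase where the CA property of $G$ is not automatic from Proposition~\ref{bcc1} (whose hypothesis demands a prime-order cyclic complement, so it misses complements such as $C_4$ in $F_{20}=C_5\rtimes C_4$), together with the exhaustive enumeration of admissible $G/Z(G)$ for the $8$-centralizer setting in part (c).
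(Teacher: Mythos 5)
Your overall strategy---classify $G/Z(G)$, use the CA property (Proposition~\ref{b011}) or Lemma~\ref{b0} to identify $Z(H)$ with $H\cap Z(G)$ and embed $H/Z(H)$ into $G/Z(G)$, then run a case analysis on the possible images---is essentially the paper's own proof. Parts (a) and (c) go through as you describe; in (c) you should say ``non-cyclic'' rather than ``non-abelian'' proper subgroup, since $H/Z(H)$ can be abelian non-cyclic (e.g.\ $C_2\times C_2$), but the conclusion $HZ(G)=G$ and the appeal to Proposition~\ref{bp4} are unaffected. The centralizer-counting half of part (b) is also sound, and your list of candidates for $G/Z(G)$ when $n=7,9$ agrees with the correct one.

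There are, however, two genuine problems in part (b). First, a circularity: you derive $G'\cong H'\cong C_{n-2}$ from ``the companion fact that $|G'|=n-2$,'' i.e.\ Proposition~\ref{ppp1}. But in the paper that proposition is proved, for $n\in\{4,5,7,9\}$, \emph{by invoking} Theorem~\ref{c1}(b), so your argument for the derived-subgroup claim proves nothing as written. An independent derivation is required; the paper obtains it from the structure of $G/Z(G)$ --- via \cite[Theorem 2.3]{baishya} in the $pq$ and $p^2$ cases (where $G$ has an abelian centralizer of prime index) and via Proposition~\ref{CG17} in the Frobenius cases $(C_4,C_5)$ and $(C_6,C_7)$, which yields $|G'|=|\Cent(G)|-2$ directly. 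Your case analysis already contains all the structural information needed to do this, so the gap is repairable, but the key claim is currently unproved. Second, you attribute the classification of $G/Z(G)$ for $n=7,9$ to Propositions~\ref{bcc1} and~\ref{CG17}; those results run only in the opposite direction (from a given $G/Z(G)$ to the value of $|\Cent(G)|$) and cannot produce the list of possibilities. That classification is an external input (\cite[Theorem 3.5]{non} together with the arguments of \cite[Theorem 2.6]{baishya1}) and must be cited as such.
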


\begin{proof}

a) We have $\mid \frac{G}{Z(G)}\mid=4, 6$ or $9$ by \cite[Theorem 3.5]{non} and hence the result follows using Theorem \ref{b57}.\\

b) Following \cite[Theorem 3.5]{non} and applying similar arguments to \cite[Theorem 2.6]{baishya1} 
, we have $\frac{G}{Z(G)} \cong (C_4, C_5)$, $(C_6, C_7)$ or $\mid \frac{G}{Z(G)}\mid \in \lbrace 4, 6,9, 10, 14, 21, 25, 49 \rbrace$. In the present scenario, using \cite[Lemma 2.1]{baishya2} and Proposition \ref{b011},  we have $Z(H)=Z(G) \cap H$ and hence $\frac{H}{Z(H)}=\frac{H}{Z(G) \cap H} \cong \frac{HZ(G)}{Z(G)} \leq \frac{G}{Z(G)}$.

Now, if  $\mid \frac{G}{Z(G)}\mid \in \lbrace 4, 6,9, 10, 14, 21, 25, 49 \rbrace$, then using Theorem \ref{b57} we have $\mid \Cent(G)\mid=\mid \Cent(H)\mid$.

Next, suppose $\frac{G}{Z(G)} \cong (C_4, C_5)$.  If $\frac{HZ(G)}{Z(G)}   < \frac{G}{Z(G)}$, then $\mid \frac{H}{Z(H)}\mid=10$ (by noting that $2$-Sylow subgroup of $\frac{G}{Z(G)}$ is cyclic) and so $\mid \Cent(G)\mid=\mid \Cent(H)\mid$ using \cite[Theorem 3.5]{non}. On the other hand if $\frac{HZ(G)}{Z(G)}  = \frac{G}{Z(G)}$, then $HZ(G)=G$ and hence $\mid \Cent(G)\mid=\mid \Cent(H)\mid$  using Proposition \ref{bp4} and  Proposition \ref{b439}.

Finally, suppose $\frac{G}{Z(G)} \cong (C_6, C_7)$. If  $\frac{HZ(G)}{Z(G)}   < \frac{G}{Z(G)}$, then $\mid \frac{H}{Z(H)} \mid=14$ or $21$ by noting that $\frac{G}{Z(G)}$ cannot have a non-abelian subgroup of order $6$ and consequently, applying arguments of \cite[Theorem 3.5]{non} to \cite[Theorem 2.6]{baishya1} we have $\mid \Cent(G)\mid=\mid \Cent(H)\mid$. On the other hand if $\frac{HZ(G)}{Z(G)}  = \frac{G}{Z(G)}$, then $HZ(G)=G$ and hence $\mid \Cent(G)\mid=\mid \Cent(H)\mid$  using Proposition \ref{bp4} and  Proposition \ref{b439}.

Second part follows using Proposition \ref{b439}, Theorem \ref{ff3}, \cite[Theorem 2.3]{baishya} and Proposition \ref{CG17}.\\

c) In view of \cite[Theorem 3.5]{non}, we have $\mid \frac{G}{Z(G)}\mid=8$ or $12$.  In the present scenario, by \cite[Lemma 2.1]{baishya2} and Proposition \ref{b011},  we have  $\frac{H}{Z(H)}=\frac{H}{Z(G) \cap H} \cong \frac{HZ(G)}{Z(G)} \leq \frac{G}{Z(G)}$. Now using  \cite[Theorem 3.5]{non} again, we have $HZ(G)=G$ and so by Proposition \ref{bp4}, $G$ is isoclinic with $H$.
\end{proof}

Note that the $6$-centralizer group $Q_{16}$ has a $4$-centralizer subgroup, namely $Q_8$, which is not isoclinic with $Q_{16}$.  The $7$-centralizer group $(C_4, C_5)$ has a $7$-centralizer subgroup of order $10$ which is not isoclinic with $(C_4, C_5)$. The  $8$-centralizer group $Q_{24}$ has a $4$-centralizer subgroup, namely $Q_8$. Again, the $9$-centralizer group $(C_6, C_7)$ has a $9$-centralizer subgroup of order $21$ which is not isoclinic with  $(C_6, C_7)$.

A finite $p$-group ($p$ a prime) $G$ is semi-extraspecial if for every maximal subgroup $N$ in $Z(G)$ the quotient $\frac{G}{N}$ is extraspecial. It is known that every semi-extraspecial $p$-group is special. Furthermore, a group $G$ is said to be ultraspecial if $G$ is semi-extraspecial and $\mid G' \mid= \sqrt{\mid G: G' \mid}$.  

\begin{prop}\label{ppp1}
If $G$ is an $n$-centralizer group with $n \in \lbrace 4, 5, 6, 7,  9\rbrace$, then $\mid G' \mid=n-2$.
\end{prop}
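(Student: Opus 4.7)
The plan is to reduce to the finite case and then split the analysis by $n$. Both $|\Cent(G)|$ (by Proposition \ref{b439}(c)) and $|G'|$ (by the definition of isoclinism, which furnishes an isomorphism $\phi\colon G' \to H'$) are invariants of isoclinism, and Theorem \ref{ff3} supplies a finite representative in each isoclinism class with at least four centralizers. Hence I may assume without loss of generality that $G$ is finite, and it suffices to verify the claim on such a set of representatives.

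For $n \in \{4, 5, 7, 9\}$, the value $n-2 \in \{2, 3, 5, 7\}$ is prime, and since $n \geq 4$ the group $G$ is non-abelian. Thus $G$ itself qualifies as a non-abelian subgroup of $G$, and applying Theorem \ref{c1}(b) with $H = G$ yields $G' \cong C_{n-2}$. In particular $|G'| = n-2$, settling these four cases immediately.

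The remaining case $n = 6$ requires an independent treatment because $n - 2 = 4$ is composite, and the Frobenius-type argument underpinning Theorem \ref{c1}(b) and Proposition \ref{CG17} does not directly apply. I would proceed by invoking the classification (from \cite[Theorem 3.5]{non} combined with the techniques of \cite[Theorem 2.6]{baishya1}) of the possible isomorphism types of $G/Z(G)$ for a $6$-centralizer group $G$; the principal outcome is $G/Z(G) \cong D_8$. Since $(D_8)'$ has order $2$, the identity $(G/Z(G))' = G'Z(G)/Z(G)$ forces $|G'Z(G) : Z(G)| = 2$, so $|G'| = 2\,|G' \cap Z(G)|$. Using Theorem \ref{ff1} to replace $G$, via isoclinism, by a representative with $Z(G) \subseteq G'$, and examining the $D_8$-presentation (in particular the relation $[a,b]^2 \in Z(G) \cap G'$ coming from the image of $\bar{a}^4 = 1$), I would conclude $|G' \cap Z(G)| = 2$, and therefore $|G'| = 4 = n-2$.

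The main obstacle is precisely the case $n = 6$. The four ``prime'' cases $\{4,5,7,9\}$ fall out as a corollary of Theorem \ref{c1}(b), but for $n = 6$ the commutator subgroup need not be cyclic of prime order and no Frobenius structure is available on $G/Z(G)$; the delicate step is to pin down $|G' \cap Z(G)|$ from the $D_8$ structure combined with the constraint $|\Cent(G)| = 6$.
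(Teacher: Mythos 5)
Your reduction to the finite case and your treatment of $n\in\{4,5,7,9\}$ via Theorem \ref{c1}(b) with $H=G$ are exactly what the paper does. The problem is the case $n=6$, where your argument has a genuine gap. The classification from \cite[Theorem 3.5]{non} does \emph{not} yield $G/Z(G)\cong D_8$ as ``the principal outcome'': for a $6$-centralizer group one has $G/Z(G)\cong D_8$, $A_4$, $C_2\times C_2\times C_2$, or $C_2\times C_2\times C_2\times C_2$, and all four possibilities actually occur. In particular $A_4$ itself is a $6$-centralizer group with $A_4'\cong C_2\times C_2$ (the paper points this out immediately after the proposition), so no analysis based on the $D_8$ quotient can cover it --- your method, if completed, would only ever produce a cyclic $G'$ of order $4$. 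Likewise there are $6$-centralizer groups with central quotient $C_2^3$ (the group $G_1$ of \cite[pp.\,56]{ed09}) and with central quotient $C_2^4$ (ultraspecial groups of order $64$). The paper handles $|G/Z(G)|=8$ and $G/Z(G)\cong A_4$ by producing an abelian (normal) centralizer of prime index and applying \cite[Theorem 2.3]{baishya} to get $|G'|=4$, and handles $|G/Z(G)|=16$ by showing $G$ is isoclinic with an ultraspecial group of order $64$ via \cite[Proposition 3.21]{baishyaF}; none of these three arguments is present in your sketch.

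Even within the one case you do consider, the key step is not carried out: after passing to a representative with $Z(G)\subseteq G'$ you assert that examining the $D_8$-presentation ``would'' give $|G'\cap Z(G)|=|Z(G)|=2$, but you give no argument ruling out a larger center, and the constraint $|\Cent(G)|=6$ --- which you correctly identify as the delicate ingredient --- is never actually used. So the four prime cases stand, but the $n=6$ case needs to be redone along the lines of the paper's case-by-case analysis of all four central quotients.
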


\begin{proof}

In view of Proposition \ref{b439} and Theorem \ref{ff3} without any loss we may assume that $G$ is a finite group.

Now, suppose $n=6$. Using \cite[Theorem 3.5]{non}, we have $\frac{G}{Z(G)} \cong D_8, A_4, C_2 \times C_2 \times C_2$ or $C_2 \times C_2 \times C_2 \times C_2$. If $\mid \frac{G}{Z(G)}\mid=8$, then in view of \cite[Proposition 2.14]{baishya2}, $G$ has an abelian  centralizer of index $2$ and consequently, using \cite[Theorem 2.3]{baishya}, we have $\mid G' \mid=4$. Again, if $\frac{G}{Z(G)} \cong A_4$, then by \cite[Proposition 2.12]{baishya2},  $G$ has an abelian normal centralizer of index $3$  and consequently, using \cite[Theorem 2.3]{baishya}, we have $\mid G' \mid=4$. Finally, if  $\mid \frac{G}{Z(G)}\mid=16$, then in view of \cite[Proposition 3.21]{baishyaF}, $G$ is isoclinic with an ultraspecial  group of order $64$ and hence $\mid G' \mid=4$. Now, the result follows using Theorem \ref{c1}.
\end{proof}

 Note that for the group $G_2$  in \cite[pp.56]{ed09}, we have $\frac{G_2}{Z(G_2)} \cong C_2 \times C_2 \times C_2$ and $ \mid \Cent(G_2)\mid=8$. In view of \cite[Lemma 3.1]{rahul1}, $G_2$ is isoclinic with a finite $2$-group and hence $\mid G_2' \mid \neq 6$. From the above result, we can also see that if $G$ and $H$ are $n$-centralizer groups with $n \in \lbrace 4, 5, 7,  9\rbrace$, then $G' \cong H'$. However, $D_{16}$ and $A_4$ are $6$-centralizer groups with $D_{16}' \cong C_4$ and $A_4' \cong C_2 \times C_2$.

\begin{prop}\label{p212}
Let  $G$ and $H$ be two finite groups of conjugate type $(p, 1)$, $p$ a prime. 
If $\mid \Cent(G)\mid=\mid \Cent(H)\mid$, then $G$ is isoclinic with $H$.  
\end{prop}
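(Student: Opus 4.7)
The plan is to reduce to the case of finite $p$-groups via the $I$-group structure of conjugate type $(p,1)$, establish that any such $p$-group carries essentially the symplectic data of an extraspecial $p$-group, and then invoke Remark~\ref{rem111}.

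First, since $G$ and $H$ are non-abelian finite groups with exactly two conjugacy class sizes (namely $1$ and $p$), they are I-groups, so by the theorem of Ito quoted in Section~\ref{S:introd} we may write $G=P\times A$ and $H=Q\times B$ with $A,B$ abelian and $P,Q$ finite $p$-groups. By Remark~\ref{rem111}, $G$ is isoclinic with $P$ and $H$ is isoclinic with $Q$, and Proposition~\ref{b439}(c) then yields $|\Cent(P)|=|\Cent(G)|=|\Cent(H)|=|\Cent(Q)|$. Since direct product with an abelian factor preserves conjugacy class sizes, $P$ and $Q$ are themselves $p$-groups of conjugate type $(p,1)$.

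Next I would verify the structure of such a $p$-group $P$: $P$ has nilpotency class $2$, $|P'|=p$, $P/Z(P)$ is elementary abelian, and the commutator induces a non-degenerate alternating $\mathbb{F}_p$-bilinear form $\beta\colon P/Z(P)\times P/Z(P)\to P'\cong \mathbb{F}_p$. The ingredients are: for each non-central $x$ the map $g\mapsto[x,g]$ has kernel $C_P(x)$ of index $p$, so $|[x,P]|=p$; class $\le 2$ follows from Ishikawa's bound on $p$-groups of bounded conjugate type; in class $2$ the commutator is bilinear and each $[x,P]$ is a cyclic subgroup of $P'$ of order $p$; and a pairing argument forces $[x,P]=[y,P]$ for all non-central $x,y$. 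Indeed, if $[x,y]\ne 1$ then $[x,y]\in[x,P]\cap[y,P]$ gives the equality, while if $x,y$ commute but $[x,P]\ne[y,P]$ then any $u$ outside their common centralizer has $[u,P]$ containing the two $\mathbb{F}_p$-linearly independent elements $[u,x]$ and $[u,y]$, contradicting $|[u,P]|=p$. This gives $|P'|=p$, and raising to $p$-th powers sends $\beta(\bar x,\cdot)$ to $p\beta(\bar x,\cdot)=0$, so $P/Z(P)$ is elementary abelian, of even $\mathbb{F}_p$-dimension $2k$ by the classification of symplectic spaces.

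Finally, the distinct non-central centralizers of $P$ correspond bijectively to the one-dimensional $\mathbb{F}_p$-subspaces of $P/Z(P)$ via $\bar x\mapsto\langle\bar x\rangle$, so $|\Cent(P)|=1+(p^{2k}-1)/(p-1)$; the equality $|\Cent(P)|=|\Cent(Q)|$ therefore forces $P$ and $Q$ to share the same $k$. Since symplectic $\mathbb{F}_p$-spaces of a given dimension are isomorphic, any symplectic isomorphism $\varphi\colon P/Z(P)\to Q/Z(Q)$ together with the induced identification $\phi\colon P'\to Q'$ provides an isoclinism between $P$ and $Q$; equivalently, both $P$ and $Q$ are isoclinic to the extraspecial group of order $p^{2k+1}$, which is unique up to isoclinism by Remark~\ref{rem111}. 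Hence $G$ is isoclinic with $H$. The main obstacle is the structural step in the second paragraph — especially the pairing argument forcing $|P'|=p$ — which is the technical heart of the proof and can either be proved directly as sketched or quoted from the classical Ito--Ishikawa analysis of $p$-groups of conjugate type $(p,1)$.
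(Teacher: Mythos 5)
Your argument is correct, and its skeleton is the same as the paper's: reduce via Ito's theorem to finite $p$-groups of conjugate type $(p,1)$, show each such group is isoclinic to an extraspecial $p$-group, use the centralizer count to pin down the rank, and finish with the fact that extraspecial groups of the same order are isoclinic (Remark \ref{rem111}). Where you genuinely diverge is in how the middle step is established. The paper gets it by citation: Theorem \ref{ff1} replaces $G$ by an isoclinic group with $Z(G_1)\subseteq G_1'$, Theorem \ref{ff155} transfers the conjugate type, Ishikawa's Proposition 3.1 then says $G_1$ is extraspecial, and Proposition 3.13 of \cite{baishyaF} supplies the count $|\Cent|=1+(p^{2k}-1)/(p-1)$. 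You instead prove the structure from scratch -- $|P'|=p$, $P/Z(P)$ elementary abelian carrying a non-degenerate alternating form, proper centralizers in bijection with lines -- which makes the proof self-contained and exhibits the isoclinism concretely as a symplectic isometry; the trade-off is that you must actually carry out the Knoche-type argument rather than lean on the literature. The one point to tighten is your appeal to ``Ishikawa's bound'' for nilpotency class $\le 2$: the general two-class-size bound is class $\le 3$, and the clean route to class $2$ in type $(p,1)$ is via $|P'|=p$ (a normal subgroup of order $p$ is central), which is mildly circular with the order in which you arranged the steps. This is repaired either by quoting Knoche's classical theorem (a finite $p$-group has all class sizes $\le p$ iff $|G'|=p$) or Ishikawa's Proposition 3.1 -- exactly the citation the paper uses -- or by proving $|P'|=p$ without assuming class $2$; since you explicitly flag this step as the one to prove or quote, the proposal stands, but as written that sentence should not be left as is.
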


\begin{proof}
In view of Ito \cite{ito}, without any loss we may assume that $G$ is a $p$-group. Furthermore, using Theorem \ref{ff1},  $G$ is isoclinic with a group $G_1$ such that $Z(G_1) \subseteq G_1'$. Note that since $Z(G_1)$ is finite, therefore $G_1$ is finite. It now follows using Theorem \ref{ff155} and \cite[Proposition 3.1]{ish1} that $G_1$ is an extraspecial $p$-group. Similarly, we can see that $H$ is isoclinic with an extraspecial $p$-group $G_2$. Now, suppose $\mid \Cent(G)\mid=\mid \Cent(H)\mid$. In the present scenario, using Proposition \ref{b439}, we have $\mid \Cent(G_1)\mid=\mid \Cent(G_2)\mid$ and consequently, applying \cite[Proposition 3.13]{baishyaF} if follows that $\mid G_1 \mid=\mid G_2 \mid$. Now the result follows using  Remark \ref{rem111}.  
\end{proof}

\begin{prop}\label{p21}
Let  $G$ be an extraspecial $p$-group of order $p^k$ for some $k$ and prime $p$. 
If $H$ is a subgroup of $G$ such that $\mid \Cent(G)\mid=\mid \Cent(H)\mid$, then $G=H$.  
\end{prop}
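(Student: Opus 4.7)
The plan is to apply Proposition~\ref{p212} to obtain an isoclinism between $G$ and $H$, and then close the argument with an order comparison.

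First I would rule out $H$ being abelian: in that case $\lvert\Cent(H)\rvert=1$, whereas $\lvert\Cent(G)\rvert>1$ since the extraspecial group $G$ is non-abelian. Write $\lvert G\rvert=p^{2a+1}$. The commutator pairing $G/Z(G)\times G/Z(G)\to G'\cong C_p$ is a non-degenerate alternating form, so every non-central centralizer in $G$ has index $p$; in particular $G$ is of conjugate type $(p,1)$.

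Next I would verify that $H$ is also of conjugate type $(p,1)$. For $x\in H\setminus Z(H)$ one has $C_H(x)=C_G(x)\cap H$, so $[H:C_H(x)]$ divides $[G:C_G(x)]=p$; since $x\notin Z(H)$ the index is exactly $p$. Combined with $\lvert\Cent(G)\rvert=\lvert\Cent(H)\rvert$, Proposition~\ref{p212} now gives that $G$ and $H$ are isoclinic.

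Finally, the isoclinism yields $H/Z(H)\cong G/Z(G)$ of order $p^{2a}$ and $H'\cong G'$ of order $p$. Since $H'\leq G'=Z(G)$, in particular $H'\leq Z(H)$, so $\lvert Z(H)\rvert\geq p$, and hence
\[
\lvert H\rvert = \lvert Z(H)\rvert\cdot\lvert H/Z(H)\rvert \geq p\cdot p^{2a} = \lvert G\rvert.
\]
Combined with $H\leq G$, this forces $H=G$. The only point requiring any care is the verification that $H$ has conjugate type $(p,1)$, but that reduces to the elementary divisibility $[H:C_H(x)]\mid[G:C_G(x)]$ and is immediate from the extraspecial structure of $G$.
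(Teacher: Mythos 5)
Your argument is correct, but it takes a different route from the paper. The paper's proof applies Lemma~\ref{b0}: since $\lvert\Cent(G)\rvert=\lvert\Cent(H)\rvert$, one must have $H\cap Z(G)=Z(H)$, and (because $Z(G)\cong C_p$ and a nontrivial finite $p$-group has nontrivial center) in fact $Z(H)=Z(G)$; then $H/Z(H)\cong HZ(G)/Z(G)\leq G/Z(G)$ shows $H$ is itself extraspecial of order $p^l$, and the formula of \cite[Proposition 3.13]{baishyaF} relating the centralizer count of an extraspecial $p$-group to its order forces $l=k$, i.e.\ $H=G$. You instead verify that both $G$ and $H$ have conjugate type $(p,1)$ --- the divisibility $[H:C_H(x)]\mid[G:C_G(x)]=p$ is exactly the right elementary observation --- and then invoke Proposition~\ref{p212} to get an isoclinism, closing with the order count $\lvert H\rvert=\lvert Z(H)\rvert\cdot\lvert H/Z(H)\rvert\geq p\cdot p^{2a}=\lvert G\rvert$. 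Each step checks out (in particular $H'\leq G'=Z(G)$ gives $H'\leq Z(G)\cap H\leq Z(H)$, so $\lvert Z(H)\rvert\geq p$). What your route buys is that it stays inside results already proved in the paper and never needs to identify $H$ as extraspecial; what it costs is a layer of indirection, since Proposition~\ref{p212} itself ultimately rests on the same external input \cite[Proposition 3.13]{baishyaF} that the paper's proof uses directly. Both proofs are sound.
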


\begin{proof}
Since $\mid \Cent(G)\mid=\mid \Cent(H)\mid$, therefore applying Lemma \ref{b0}, we have  $H \cap Z(G)=Z(H)=Z(G)$ and consequently, $\frac{H}{Z(H)}=\frac{H}{Z(G) \cap H} \cong \frac{HZ(G)}{Z(G)} \leq \frac{G}{Z(G)}$. It now follows that $H$ is  an extraspecial $p$-group of order $p^l$ for some $l$. Therefore by \cite[Proposition 3.13]{baishyaF}, $k=l$ and hence $G=H$.
\end{proof}

\begin{prop}\label{per}
Let  $G$ be any group such that $\frac{G}{Z(G)} \cong C_p \times C_p$, $p$ a prime. Then $G$ is isoclinic with an extraspecial group of order $p^3$.
\end{prop}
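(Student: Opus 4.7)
The plan is to pass, via Theorem \ref{ff1}, to an isoclinic representative $G_1$ whose center sits inside its commutator subgroup, and then show that $G_1$ must be an extraspecial group of order $p^3$.

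First, since $G/Z(G)\cong C_p\times C_p$ is abelian, $G'\subseteq Z(G)$, so $G$ is nilpotent of class at most $2$. In a class-$2$ group the commutator map descends to a $\mathbb{Z}$-bilinear map $G/Z(G)\times G/Z(G)\longrightarrow G'$, and the identities $[x^p,y]=[x,y]^p$ hold. Picking representatives $x,y\in G$ of generators of $G/Z(G)\cong C_p\times C_p$, one sees that $G'$ is generated by $[x,y]$, so $G'$ is cyclic; and since $x^p,y^p\in Z(G)$ give $[x,y]^p=1$, the generator has order dividing $p$. As $G$ is non-abelian, $G'\cong C_p$.

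Now apply Theorem \ref{ff1} to obtain a group $G_1$ isoclinic with $G$ and satisfying $Z(G_1)\subseteq G_1'$. Isoclinism yields $G_1/Z(G_1)\cong G/Z(G)\cong C_p\times C_p$ and $G_1'\cong G'\cong C_p$. Because $Z(G_1)\subseteq G_1'$ and $|G_1'|=p$, we have $|Z(G_1)|\in\{1,p\}$; but $|Z(G_1)|=1$ would force $|G_1|=p^2$, hence $G_1$ abelian, contradicting the non-triviality of $G_1/Z(G_1)$. Therefore $|Z(G_1)|=p$, so $|G_1|=p^3$ and $Z(G_1)=G_1'$ has order $p$. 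This is exactly the definition of an extraspecial group of order $p^3$, so $G$ is isoclinic with such a group, as required.

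The only mildly delicate point is the finiteness/order computation for $G'$: one must exploit class-$2$ nilpotency to argue bilinearity of the commutator and $p$-torsion, rather than assume $G$ finite a priori. Everything else is a direct consequence of Theorem \ref{ff1} together with the invariance of $G/Z(G)$ and $G'$ under isoclinism recorded in Proposition \ref{b439}.
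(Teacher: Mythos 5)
Your proof is correct, and its overall skeleton matches the paper's: both pass via Theorem \ref{ff1} to an isoclinic representative whose center lies in its commutator subgroup and then identify that representative as extraspecial of order $p^3$. The difference is in how the order is pinned down. The paper appeals to the fact that every proper element centralizer of the representative $N$ is abelian normal of index $p$ and then invokes a counting lemma of Berkovich--Zhmud$^{\prime}$ to get $|N| = p\,|Z(N)|\,|N'|$, from which $|N|=p^3$ follows since $|N/Z(N)|=p^2$. You instead compute $G'\cong C_p$ directly from the class-$2$ structure: the commutator map descends to a bilinear map on $G/Z(G)\cong C_p\times C_p$, so $G'$ is generated by a single commutator $[x,y]$ with $[x,y]^p=[x^p,y]=1$, and then the isoclinism invariants $G_1/Z(G_1)\cong C_p\times C_p$ and $G_1'\cong C_p$ together with $Z(G_1)\subseteq G_1'$ force $|G_1|=p^3$ and $Z(G_1)=G_1'$. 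Your route is more elementary and self-contained (it avoids the external lemma and makes the finiteness of the representative transparent), at the cost of a slightly longer hands-on commutator computation; one small point worth keeping explicit, which you do handle implicitly, is that $G_1/G_1'=G_1/Z(G_1)$ is elementary abelian, as required by the definition of extraspecial in Remark \ref{rem111}.
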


\begin{proof}
Using Theorem \ref{ff1} and arguments in the proof of Theorem \ref{ff3}, $G$ is isoclinic with a finite group $N$ of order $p^n$ with $Z(N) \subseteq N'$. Moreover, since $\frac{N}{Z(N)} \cong C_p \times C_p$, therefore $Z(N) = N'$. Also note that any proper centralizer of $N$ is abelian normal of index $p$ in $N$. Hence using  \cite[Lemma 4, pp. 303]{zumud}, we have $\mid N \mid=p.\mid Z(N) \mid .\mid N'\mid$ and consequently, $N$ is an extraspecial
group of order $p^3$. 
\end{proof}

\begin{cor}\label{cor444}
Let  $G$ and $H$ be any two groups such that $\frac{G}{Z(G)} \cong \frac{H}{Z(H)} \cong C_p \times C_p$, $p$ a prime. Then $G$ is isoclinic with $H$.
\end{cor}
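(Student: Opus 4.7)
The plan is to reduce both $G$ and $H$ to extraspecial groups of order $p^3$ via Proposition \ref{per}, and then invoke the fact that any two extraspecial groups of the same order are isoclinic. More precisely, first I would apply Proposition \ref{per} to $G$: since $\frac{G}{Z(G)} \cong C_p \times C_p$, there exists an extraspecial group $E_1$ of order $p^3$ such that $G$ is isoclinic with $E_1$. Applying the same proposition to $H$ (whose central quotient is also $C_p \times C_p$), I obtain an extraspecial group $E_2$ of order $p^3$ such that $H$ is isoclinic with $E_2$.

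Next I would appeal to Remark \ref{rem111}, which states that any two extraspecial $p$-groups of the same order are isoclinic. Applied to $E_1$ and $E_2$, which both have order $p^3$, this yields that $E_1$ is isoclinic with $E_2$ (this covers both the $p$ odd case, where up to isomorphism there are two extraspecial groups of order $p^3$ but they are isoclinic, and the $p=2$ case, where the two choices are $D_8$ and $Q_8$). Finally, since isoclinism is an equivalence relation, the chain
\[
G \sim E_1 \sim E_2 \sim H
\]
gives that $G$ is isoclinic with $H$, completing the proof.

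There is essentially no obstacle here beyond correctly assembling the two previously established tools: the classification-type statement in Proposition \ref{per} does all the heavy lifting, and Remark \ref{rem111} supplies the comparison between the two extraspecial representatives. The only minor point to double-check is the transitivity of isoclinism, which is a well-known property of P.\ Hall's original definition recalled at the start of Section \ref{S:introd}.
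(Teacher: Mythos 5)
Your argument is correct and is exactly the paper's proof: apply Proposition \ref{per} to each of $G$ and $H$ to obtain isoclinic extraspecial groups of order $p^3$, invoke Remark \ref{rem111} to identify these two representatives up to isoclinism, and conclude by transitivity. The paper simply states this in one line, so your write-up is a faithful (and slightly more detailed) version of the same reasoning.
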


\begin{proof}
The result follows from Proposition \ref{per} and Remark \ref{rem111}.
\end{proof}

The following result shows that any two $4$-centralizer groups are isoclinic.

\begin{prop}\label{corollary341}
Any $4$-centralizer group is isoclinic with $Q_8$.
\end{prop}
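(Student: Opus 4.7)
The plan is to combine the structure theorem for the quotient $G/Z(G)$ of a $4$-centralizer group with the earlier result (Corollary \ref{cor444}) on groups whose central quotient is $C_p\times C_p$, and then identify $Q_8$ inside the isoclinism class of extraspecial $2$-groups of order $8$.

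First I would determine the central quotient. By \cite[Theorem 3.5]{non} (invoked in the proof of Theorem \ref{c1}), the order $\mid G/Z(G)\mid$ of a $4$- or $5$-centralizer group lies in $\{4,6,9\}$; the values $6$ and $9$ can be ruled out for $n=4$ using the bookkeeping already established in the paper. Namely, if $\mid G/Z(G)\mid=6$ then $G/Z(G)\cong D_6$ and Proposition \ref{p2}(a) forces $\mid \Cent(G)\mid=5$, and if $\mid G/Z(G)\mid=9$ then Theorem \ref{b57} gives $\mid \Cent(G)\mid=5$ as well. So $\mid G/Z(G)\mid=4$, and since $G/Z(G)$ cyclic would make $G$ abelian (hence $1$-centralizer), we conclude $G/Z(G)\cong C_2\times C_2$.

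Next I would apply Corollary \ref{cor444} (equivalently, Proposition \ref{per}) with $p=2$ to deduce that $G$ is isoclinic with an extraspecial $2$-group of order $2^3=8$. The two such groups up to isomorphism are $D_8$ and $Q_8$, and by Remark \ref{rem111} any two extraspecial groups of the same order are isoclinic. Transitivity of isoclinism then yields $G$ isoclinic with $Q_8$, as required.

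There is no real obstacle here: once the possibilities for $\mid G/Z(G)\mid$ are narrowed down, the work is already done by Proposition \ref{per} and Remark \ref{rem111}. The only careful point is eliminating $\mid G/Z(G)\mid\in\{6,9\}$ for the case $n=4$, which is handled by reading off the centralizer counts produced by Proposition \ref{p2}(a) and Theorem \ref{b57} respectively.
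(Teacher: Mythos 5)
Your proof is correct and follows essentially the same route as the paper: Zarrin's \cite[Theorem 3.5]{non} pins down $G/Z(G)\cong C_2\times C_2$, then Proposition \ref{per} and Remark \ref{rem111} finish the argument. The only difference is your extra step of ruling out $\mid G/Z(G)\mid\in\{6,9\}$, which is unnecessary since the cited theorem already characterizes $4$-centralizer groups as exactly those with $G/Z(G)\cong C_2\times C_2$, but it does no harm.
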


\begin{proof}
It follows using \cite[Theorem 3.5]{non}, Proposition \ref{per} and Remark \ref{rem111}.
\end{proof}

\begin{lem}\label{p1}
If $M$ is a maximal non-abelian subgroup of a CA-group $G$, then either $Z(G)=Z(M)$ or $G$ is isoclinic with $M$.
\end{lem}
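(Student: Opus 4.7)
The plan is to analyze the subgroup $MZ(G)$ and use the maximality of $M$ among non-abelian subgroups of $G$. Since $Z(G)$ is central, $MZ(G)$ is a subgroup of $G$, and it contains the non-abelian subgroup $M$, so it is itself non-abelian. By the maximality assumption, any non-abelian subgroup of $G$ containing $M$ must either equal $M$ or equal $G$, so either $MZ(G) = M$ or $MZ(G) = G$.

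First I would dispatch the case $MZ(G) = M$. This means $Z(G) \subseteq M$, hence $Z(G) \subseteq M \cap Z(G)$. On the other hand, since $G$ is a CA-group and $M$ is a non-abelian subgroup, Proposition \ref{b011} yields $Z(M) = Z(G) \cap M$. Combining, $Z(G) \subseteq Z(M) \subseteq Z(G)$, which gives $Z(G) = Z(M)$, the first alternative in the conclusion.

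Next I would handle the case $MZ(G) = G$. Here \cite[Lemma 2.7]{pL95} applies directly: whenever $G = HZ(G)$ for a subgroup $H$, the groups $G$ and $H$ are isoclinic. Taking $H = M$ gives that $G$ is isoclinic with $M$, the second alternative.

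The argument is essentially a two-line dichotomy once one observes that $MZ(G)$ is a non-abelian subgroup, so I do not foresee any genuine obstacle; the only subtle point is being careful with the definition of ``maximal non-abelian subgroup''. Here I interpret it as: $M$ is non-abelian, and no non-abelian subgroup of $G$ strictly contains $M$ (equivalently, every subgroup of $G$ strictly larger than $M$ is abelian). Under this reading the dichotomy $MZ(G) \in \{M, G\}$ is immediate, and both cases conclude via the results already established in the paper.
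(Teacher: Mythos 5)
Your proof is correct and follows essentially the same route as the paper: the paper splits on whether $Z(G)\subseteq M$ (equivalently, whether $MZ(G)=M$), uses Proposition \ref{b011} to get $Z(M)=Z(G)\cap M=Z(G)$ in the first case, and invokes maximality plus \cite[Lemma 2.7]{pL95} to conclude $MZ(G)=G$ and isoclinism in the second. Your careful remark about the meaning of ``maximal non-abelian subgroup'' matches the reading the paper's proof implicitly uses.
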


\begin{proof}
If $Z(G) \subseteq M$, then using Proposition \ref{b011} we have $Z(G)=Z(M)$. On the otherhand, if $Z(G) \nsubseteq M$, then $MZ(G)=G$ and hence $G$ is isoclinic with $M$ by \cite[Lemma 2.7]{pL95}. 
\end{proof}

\begin{prop}\label{ba567}
Let $H$ be a non-abelian subgroup of $G$ with $\mid \frac{G}{Z(G)} \mid=p^3$ ($p$ a prime).Then  $\mid \Cent(G)\mid=\mid \Cent(H)\mid$ implies  $G$ is isoclinic with $H$.
\end{prop}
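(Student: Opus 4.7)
The plan is to begin with Lemma \ref{b0}: since $|\Cent(G)|=|\Cent(H)|$ and $H\cap Z(G)\subseteq Z(H)$ automatically, the lemma forces $Z(H)=H\cap Z(G)$, for otherwise the elements of $H$ would already produce at least $|\Cent(H)|+1$ distinct centralizers in $G$. Consequently $H/Z(H)\cong HZ(G)/Z(G)\leq G/Z(G)$, so $|H/Z(H)|$ divides $p^{3}$. Since $H$ is non-abelian, $H/Z(H)$ is not cyclic, leaving only $|H/Z(H)|=p^{3}$ or $|H/Z(H)|=p^{2}$ with $H/Z(H)\cong C_{p}\times C_{p}$.

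When $|H/Z(H)|=p^{3}$, equality of orders forces $HZ(G)=G$, and Proposition \ref{bp4} (equivalently \cite[Lemma 2.7]{pL95}) then yields that $G$ is isoclinic with $H$. The remaining task is therefore to exclude $|H/Z(H)|=p^{2}$. In that case Proposition \ref{per} identifies $H$ up to isoclinism with an extraspecial $p$-group of order $p^{3}$, so Proposition \ref{b439} gives $|\Cent(H)|=p+2$, and the hypothesis then forces $|\Cent(G)|=p+2$.

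Next I would normalize $G$: by Theorems \ref{ff1} and \ref{ff3} we may replace $G$ by an isoclinic finite group $G_{1}$ with $Z(G_{1})\subseteq G_{1}'$, and because $G_{1}/Z(G_{1})\cong G/Z(G)$ is a $p$-group, the nilpotency and Sylow decomposition argument appearing in the proof of Proposition \ref{per} forces every $q$-Sylow subgroup of $G_{1}$ with $q\neq p$ to be trivial. Hence $G_{1}$ is a finite $p$-group with $|G_{1}/Z(G_{1})|=p^{3}$ and, by Proposition \ref{b439}, $|\Cent(G_{1})|=p+2$.

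The main obstacle is to derive a contradiction from these data. The $p+1$ distinct non-central centralizers $C_{1},\ldots,C_{p+1}$ of $G_{1}$ cover $G_{1}$ (each non-central element lies in its own centralizer, and each $C_{i}$ contains $Z(G_{1})$), and their intersection equals $\bigcap_{g\notin Z(G_{1})}C_{G_{1}}(g)=Z(G_{1})$. Passing to $\overline{G}_{1}=G_{1}/Z(G_{1})$ thus yields $p+1$ proper subgroups of a finite $p$-group of order $p^{3}$ whose union is everything and whose intersection is trivial. By the theorem of Tomkinson on minimum covers of $p$-groups, any such cover factors through a surjection $\overline{G}_{1}\twoheadrightarrow C_{p}\times C_{p}$ whose kernel is the common intersection, forcing $|\overline{G}_{1}|=p^{2}$ and contradicting $|\overline{G}_{1}|=p^{3}$. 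If one wishes to avoid invoking Tomkinson, a direct case analysis on the isomorphism type of $\overline{G}_{1}$ (the relevant options being $C_{p}^{3}$, $C_{p^{2}}\times C_{p}$, and the non-abelian groups of order $p^{3}$) suffices: in each case any cover by $p+1$ proper subgroups is forced to have non-trivial common intersection, yielding the same contradiction and thus completing the proof.
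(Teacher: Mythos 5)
Your proof is correct, and its skeleton is the same as the paper's: show $Z(H)=H\cap Z(G)$, embed $H/Z(H)$ in $G/Z(G)$, rule out $\lvert H/Z(H)\rvert=p^{2}$, deduce $HZ(G)=G$, and finish with Lescot's lemma (Proposition \ref{bp4}). The two places where you diverge are worth noting. First, you obtain $Z(H)=H\cap Z(G)$ from Lemma \ref{b0} and the hypothesis $\lvert\Cent(G)\rvert=\lvert\Cent(H)\rvert$, while the paper gets it from the CA-property of groups with central quotient of order $p^{3}$ (\cite[Lemma 2.1]{baishya2} together with Proposition \ref{b011}); both are sound. Second, and more substantially, where the paper excludes $\lvert H/Z(H)\rvert=p^{2}$ by citing \cite[Proposition 2.14]{baishya2}, you prove the needed fact directly: a group with $\lvert G/Z(G)\rvert=p^{3}$ cannot have exactly $p+2$ element centralizers, because its $p+1$ proper centralizers would cover $G$ and intersect in $Z(G)$, while any cover of a group of order $p^{3}$ by $p+1$ proper subgroups has non-trivial common intersection. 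That covering fact is true, and your fallback case analysis does establish it: a counting bound shows at least $p$ of the covering subgroups must be maximal; if only $p$ are maximal the union is too small, since two maximal subgroups of a group of order $p^{3}$ meet in at least $p$ elements; and if all $p+1$ are maximal their intersection contains the Frattini subgroup of $G/Z(G)$, which, when trivial, reduces the question to $p+1$ hyperplanes covering $\mathbb{F}_p^{3}$, forced by the blocking-set (Bose--Burton) argument in $PG(2,p)$ to share a common line. Two small criticisms: the reduction to a finite $p$-group via Theorems \ref{ff1} and \ref{ff3} is superfluous, since the whole argument lives in $G/Z(G)$, which already has order $p^{3}$ by hypothesis; and the appeal to ``Tomkinson's theorem'' is stated too loosely to carry the step on its own, so the elementary case analysis you sketch should be regarded as the actual proof. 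The net effect is that your version is self-contained precisely where the paper imports a structural result from another of the author's papers.
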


\begin{proof}
In view of \cite[Lemma 2.1]{baishya2} and  Proposition \ref{b011},  $\frac{H}{Z(H)} \cong \frac{HZ(G)}{Z(G)} \leq \frac{G}{Z(G)}$. In the present scenario using \cite[Proposition 2.14]{baishya2}, we have $HZ(G)=G$ by noting that if $\mid \frac{H}{Z(H)} \mid=p^2$, then $\mid \Cent(H)\mid=p+2$. Hence by \cite[Lemma 2.7]{pL95}, $G$ is isoclinic with $H$.
\end{proof}

Combining Corollary \ref{cor444} and \cite{non, baishya1}  it is easy to see that any two nilpotent $n \in \lbrace 5, 7, 9 \rbrace$-centralizer  groups are isoclinic. For $5$-centralizer groups we have the following result:

\begin{prop}\label{corollary1}
Any $5$-centralizer group $G$ is isoclinic with $G_m= \langle a, b \mid a^3=b^{2^m}=1, bab^{-1}=a^{-1}\rangle$, where $m \geq 1$ or an extraspecial group of order $27$.
\end{prop}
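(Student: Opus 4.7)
\emph{Proof plan.} My strategy is to split on the order of $G/Z(G)$, which by the classification invoked in Theorem~\ref{c1}(a) (originally \cite[Theorem~3.5]{non}) lies in $\{4,6,9\}$ for any $4$- or $5$-centralizer group. Since $G$ is $5$-centralizer, Proposition~\ref{corollary341} rules out $|G/Z(G)|=4$: such a group would be isoclinic with $Q_8$ and hence $4$-centralizer. So $|G/Z(G)|\in\{6,9\}$.

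First suppose $|G/Z(G)|=9$. Since $G$ is non-abelian, $G/Z(G)$ is non-cyclic, hence $G/Z(G)\cong C_3\times C_3$. Proposition~\ref{per} then immediately yields that $G$ is isoclinic with an extraspecial group of order $27$, which is one of the two alternatives in the conclusion.

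Now suppose $|G/Z(G)|=6$, so $G/Z(G)\cong S_3$. Apply Theorem~\ref{ff1} to pass to an isoclinic group $N$ with $Z(N)\subseteq N'$. The isomorphism $G'\cong N'$ built into the definition of isoclinism, combined with $|G'|=3$ from Proposition~\ref{ppp1} (or Theorem~\ref{c1}(b)), gives $|N'|=3$, and hence $|Z(N)|\in\{1,3\}$. The possibility $|Z(N)|=3$ would force $Z(N)=N'$ and consequently $N/N'=N/Z(N)\cong S_3$ would have to be abelian---impossible. Therefore $Z(N)=\{1\}$, so $N\cong N/Z(N)\cong S_3=G_1$, and $G$ is isoclinic with $G_1$.

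It remains to observe that all $G_m$ with $m\geq 1$ lie in the same isoclinism class as $G_1$. A direct calculation in the presentation shows $Z(G_m)=\langle b^2\rangle$ (of order $2^{m-1}$), $G_m'=\langle a\rangle\cong C_3$, and $G_m/Z(G_m)\cong S_3$; moreover the commutator relation $[a,b]=a^{-1}$ in $G_m$ matches that in $G_1=S_3$, furnishing the compatible pair of isomorphisms $(\varphi,\phi)$ required by the isoclinism definition. Consequently $G$ is isoclinic with $G_m$ for every $m\geq 1$. The one delicate point in the whole argument is excluding $|Z(N)|=3$ in the stem of the $|G/Z(G)|=6$ case, which I handle via the clash between abelianity of $N/N'$ and the nonabelian isomorphism $N/Z(N)\cong S_3$.
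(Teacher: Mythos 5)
Your proof is correct, and for the $S_3$ case it takes a genuinely different route from the paper. Both arguments begin identically: reduce via \cite[Theorem 3.5]{non} to $G/Z(G)\cong S_3$ or $C_3\times C_3$, and dispose of the $C_3\times C_3$ case by Proposition~\ref{per}. For $G/Z(G)\cong S_3$, however, the paper invokes Lescot's structure theorem \cite[Corollary 2.2]{lescot}, which says outright that such a $G$ decomposes as $G_m\times A$ with $A$ abelian, and then strips off $A$ by Remark~\ref{rem111}; you instead run a stem-group argument: pass by Theorem~\ref{ff1} to an isoclinic $N$ with $Z(N)\subseteq N'$, use $|N'|=|G'|=3$ (from Proposition~\ref{ppp1}, which is proved independently of this statement, so there is no circularity) to force $|Z(N)|\in\{1,3\}$, and kill the case $Z(N)=N'$ because $N/N'$ is abelian while $N/Z(N)\cong S_3$ is not. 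Your route stays entirely inside the paper's own toolkit and isolates the cleaner fact that every such $G$ is isoclinic to $S_3$ itself, at the cost of having to verify separately that all the $G_m$ lie in that isoclinism class (your computation of $Z(G_m)=\langle b^2\rangle$, $G_m'=\langle a\rangle$ and the matching commutator pairing on $G_m/Z(G_m)\cong S_3$ is terse but sound); the paper's route is shorter because Lescot's classification hands over the list $G_m$ exactly as it appears in the statement. Your preliminary elimination of $|G/Z(G)|=4$ is harmless but unnecessary, since \cite[Theorem 3.5]{non} already separates the $5$-centralizer case.
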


\begin{proof}
In view of Theorem \ref{ff3} without any loss we may assume that $G$ is finite. Moreover, by  \cite[Theorem 3.5]{non} we have $\frac{G}{Z(G)} \cong S_3$ or $ C_3 \times C_3$. Now, if   $\frac{G}{Z(G)} \cong S_3$, then using \cite[Corollary 2.2]{lescot}, we have $G=G_m \times A$, where $m \geq 1$ and $A$ is an abelian group. Hence by Remark \ref{rem111}, $G$ is isoclinic with $G_m$. Again, if  $\frac{G}{Z(G)} \cong C_3 \times C_3$, then by Proposition \ref{per}, we have the result.
\end{proof}

Let $p$ be a prime. The author in \cite[Theorem 3.3]{en09} proved that if $G$ is a finite $(p^2+2)$-centralizer group of conjugate type $(p^2, 1)$  and two of the proper element centralizers are normal in $G$, then $\frac{G}{Z(G)}$ is elementary abelian of order $p^4$. We conclude the paper with the following generalization of this result.

\begin{thm}\label{b7}
Let $G$ be any finite $(n+2)$-centralizer   group of conjugate type $(n, 1)$. Then $G$ is a CA-group and $\frac{G}{Z(G)}$ is elementary abelian of order $n^2$.
\end{thm}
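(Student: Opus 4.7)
The plan is to reduce $G$ to a $p$-group of class $2$, introduce the commutator pairing on $V = G/Z(G)$, and conclude via a vector space partition argument. By Ito's theorem on groups with two conjugacy class sizes, $n = p^k$ for some prime $p$ and $G = P \times A$ with $P$ a Sylow $p$-subgroup and $A$ abelian; centralizers, conjugate type, and the target conclusions transfer between $G$ and $P$, so I may assume $G$ is a $p$-group. By Ishikawa's theorem for $p$-groups of conjugate type $(p^k,1)$, $G$ has nilpotency class at most $2$, so $G' \leq Z(G)$. Define $B \colon V \times V \to G'$ by $B(\bar x, \bar y) = [x, y]$: the class-$2$ condition makes $B$ biadditive, $[x,x] = 1$ makes it alternating, and nondegeneracy follows since $\bar x = 0$ iff $x \in Z(G)$. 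For each $\bar x \neq 0$, $B(\bar x, V) = [x, G]$ has order $|x^G| = n = p^k$.

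If some $\bar x \in V$ had order at least $p^2$, then $p\bar x \neq 0$, so $|B(p\bar x, V)| = p^k$; but in class $2$ we have $B(p\bar x, V) = p \cdot B(\bar x, V)$, and multiplication by $p$ on the nontrivial finite $p$-group $B(\bar x, V)$ of order $p^k$ yields a subgroup of order at most $p^{k-1}$, a contradiction. So $V$ is elementary abelian; analogously $[x,y]^p = [x^p, y] = 1$ gives $G'$ exponent $p$. Using the standard counting of the paper, $\{x \in G : C(x) = C_i\} = Z(C_i) \setminus Z(G)$ for the proper centralizers $C_1, \ldots, C_{n+1}$ (each of order $|G|/n$), and these partition $G \setminus Z(G)$, whence
\[
|G| + n|Z(G)| = \sum_{i=1}^{n+1} |Z(C_i)| \leq (n+1)|G|/n,
\]
so $|G/Z(G)| \geq n^2$, with equality if and only if every $C_i$ is abelian, i.e., $G$ is CA.

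The reverse bound $|G/Z(G)| \leq n^2$, which is the main obstacle, I would handle via a vector space partition argument. Let $K_i = C_i/Z(G)$ of codim $k$ in $V$, and $U_i = \{v \in V : \ker B(v,\cdot) = K_i\} \cup \{0\}$. Bilinearity with the constant rank $k$ of $B$ on nonzero arguments makes each $U_i$ an $\mathbb{F}_p$-subspace of $K_i$: for $v_1, v_2 \in U_i$ with $v_1 + v_2 \neq 0$, $\ker B(v_1+v_2,\cdot) \supseteq K_i$ and has codimension $k$, hence equals $K_i$. The $U_i$ partition $V$ with pairwise trivial intersection, and there are $p^k + 1$ of them. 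Writing $r = \dim V$ and $d_i = \dim U_i \leq r-k$, the pairwise triviality gives $d_i + d_j \leq r$, and the identity $\sum(p^{d_i} - 1) = p^r - 1$ becomes $\sum p^{d_i} = p^r + p^k$. If $d_{\max} := \max_i d_i$ equals $r - k$, then every other $d_i \leq k$, and comparing $\sum p^{d_i} = p^r + p^k$ with the upper bound $p^{r-k} + p^k \cdot p^k$ forces $p^{r-k} \leq p^k$, whence $r = 2k$ and all $d_i = k$; if instead $d_{\max} \leq r-k-1$, then $\sum p^{d_i} \leq (p^k+1)p^{r-k-1} < p^r + p^k$, a contradiction. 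Hence $U_i = K_i$ of dimension $k$; this makes $B$ vanish on each $K_i \times K_i$, so every $C_i$ is abelian, $G$ is CA, and $V$ is elementary abelian of order $n^2$.
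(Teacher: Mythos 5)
Everything in your argument from the point where you have $G' \le Z(G)$ onward is correct, and it is a genuinely different route from the paper's: the paper covers $G$ by its $n+1$ proper centralizers $X_i$ (each of index $n$), notes $|G|=\sum_{i=2}^{n+1}|X_i|$, and invokes Cohn's theorem on $n$-sum groups to get $G=X_iX_j$ and $X_i\cap X_j=Z(G)$ outright, with no input about nilpotency class. Your alternating biadditive form on $V=G/Z(G)$, the exponent-$p$ argument, and the partition of $V$ into the subspaces $U_i$ with the $\sum p^{d_i}=p^r+p^k$ count are all sound and would constitute a nice alternative proof. The fatal step is the one you rest all of this on: ``by Ishikawa's theorem for $p$-groups of conjugate type $(p^k,1)$, $G$ has nilpotency class at most $2$.'' That is not Ishikawa's theorem. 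What Ishikawa proved (Israel J. Math. 129 (2002)) is that a finite $p$-group with exactly two conjugacy class sizes has nilpotency class at most $3$; class at most $2$ is known only for conjugate types $(p,1)$, $(p^2,1)$ and $(p^3,1)$ (Ishikawa's 1999 paper for the first, Naik--Yadav for the others), and whether it holds for all $k$ is an open question. Without $G'\le Z(G)$ the map $B(\bar x,\bar y)=[x,y]$ is not biadditive --- already $[x_1x_2,y]=[x_1,y][x_2,y]$ requires class $2$ --- so the pairing, the exponent argument, and the subspace structure of the $U_i$ all collapse.

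Note also that you cannot recover class $2$ from your intended conclusion without circularity: $G/Z(G)$ abelian is equivalent to $G'\le Z(G)$, which is exactly what you assume at the outset. To repair the proof you would have to derive $G'\le Z(G)$ from the actual hypotheses --- the $(n+2)$-centralizer condition on top of conjugate type $(n,1)$, not conjugate type alone --- and the natural way to do that is precisely the paper's covering/Cohn argument, which then yields the whole theorem in one step and makes the bilinear-form machinery unnecessary.
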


\begin{proof}
In view of Ito \cite{ito}, without any loss we may assume that $G$ is a $p$-group for some prime $p$. Let $X_i=C(x_i), 1\leq i \leq n+1$ where $x_i \in G \setminus Z(G)$. We have $G=\overset{n+1}{\underset{i=1}{\cup}}X_i$ and  $\mid G \mid=\overset{n+1}{\underset{i=2}{\sum}} \mid X_i \mid$. In the present scenario, interchanging $X_i$'s and applying  \cite[Cohn's Theorem]{cohn}, we have $G=X_iX_j$ and $X_i \cap X_j=Z(G)$ for any $1 \leq i, j \leq n+1, i \neq j$. It is easy to verify that $\mid \frac{G}{Z(G)} \mid=n^2$ and $G$ is a CA-group. Moreover, using \cite [Proposition 2]{mann} we have $\frac{G}{Z(G)}$ is elementary abelian.
\end{proof}

\section*{Acknowledgment}

I would like to thank Prof. Mohammad Zarrin for carefully reading the manuscript and giving his valuable suggestions and comments on it.


\begin{thebibliography}{33}

\bibitem{ed09}
A. Abdollahi, S. M. J. Amiri and A. M. Hassanabadi,  {\em Groups with specific number of centralizers}, Houst. J. Math., \textbf{33} (1) (2007), 43--57.


\bibitem{amiri2019}
 S. M. J. Amiri, H. Madadi and H. Rostami,  {\em On $9$-centralizer groups},  J. Algebra Appl., \textbf{14} (1) (2015), 01--13.


\bibitem{rostami1}
 S. M. J. Amiri and H. Rostami,  {\em Groups with a few non-abelian centralizers},  Publ. Math. Debrecen, \textbf{87}   (3--4) (2015), 429--437.



\bibitem{en09}
 A. R. Ashrafi, {\em On finite groups with a given number of centralizers}, Algebra Colloq., \textbf{7} (2) (2000), 139--146.
 
 

  
  
\bibitem{taeri}
A. R. Ashrafi and B. Taeri, {\em On finite groups with a certain number of centralizers},  J. Appl. Math. Comput., \textbf{17} (2005), 217--227.
  
  
\bibitem{actc09}
A. R. Ashrafi and B. Taeri, {\em On finite groups with exactly seven element centralizers},  J. Appl. Math. Comput., \textbf{22} (1--2) (2006), 403--410.
   
    
\bibitem{ashrafinew}
A. R. Ashrafi, F. K. Moftakhar and M. A. salahshour, {\em Counting the number of centralizers of $2$-element subsets in a finite group}, Comm. Alg., \textbf{48} (11) (2020) 4647--4662.
  
 

\bibitem{baishya}
S. J. Baishya, {\em On  finite groups with specific number of centralizers}, Int. Elect. J. Algebra, \textbf{13} (2013) 53--62.

\bibitem{baishya1}
S. J. Baishya, {\em On  finite groups with nine centralizers}, Boll. Unione Mat. Ital., \textbf{9} (2016) 527--531.



\bibitem{baishya2}
S. J. Baishya, {\em On  capable groups of order $p^2q$}, Comm. Alg., \textbf{48} (6)  (2020), 2632--2638.




\bibitem{baishyaF}
S. J. Baishya, {\em Counting centralizers and $z$-classes of some F-groups}, Comm. Alg., \textbf{50} (6) (2022) 2476--2487.

\bibitem{baishyarima}
S. J. Baishya, {\em Characterizations of some groups in terms of centralizers}, Results Math, \textbf{77, 168} (2022) https://doi.org 10.1007/s00025-022-01687-4.


\bibitem{ctc092}
S. M. Belcastro and G. J. Sherman, {\em Counting centralizers in finite groups}, Math. Magazine, \textbf{67} (5) (1994), 366--374.

\bibitem{zumud}
Y. G. Berkovich and   E. M. Zhmud$^{\prime}$,   \textit{Characters of Finite Groups}. Part 1, Transl. Math. Monographs {\bf 172}, Amer. Math. Soc., Providence, RI, 1998. 




\bibitem{cohn}
J. H. E. Cohn, {\em On $n$-sum groups}, Math. Scand, \textbf{75} (1994), 44--58.



\bibitem{conrad}
K. Conrad, {\em Dihedral groups II}, https://kconrd.math.uconn.edu/Expository papers.


\bibitem{extcent}
M. Divandari, F. P. Shanbehbazari, P. Niroomand and A. F. Salles, {\em On finite groups with a given njumber of exterior centralizers}, Comm. Alg., \textbf{47} (1) (2019) 182--187.




\bibitem{hall}
P. Hall, {\em The classification of prime power groups},  J. reine angew Math.,  \textbf{182} (1940), 130--141.


\bibitem{nca}
M. A. Iranmanesh and M. H. Zareian, {\em On $n$-centralizer CA-groups}, Comm. Alg., \textbf{49} (10) (2021) 4186--4195.


\bibitem{isaacs}
I. M. Isaacs, {\em Character Theory of Finite groups}, Dover Publications, Inc., New York, (1994). 


\bibitem{ish1}
K. Ishikawa, {\em Finite $p$-groups upto isoclinism, Which have only two conjugacy lengths},  J. Algebra,  \textbf{220} (1999), 333--345.


\bibitem{ito}
N. Ito, {\em On finite groups with given conjugate type, I}, Nagoya J. Math.,  \textbf{6} (1953), 17--28.


\bibitem{con}
K. Khoramshahi and M. Zarrin, {\em Groups with the same number of centralizers}, J. Algebra Appl. \textbf{20} (2) (2021), 6 pages.


\bibitem{rahul2}
R. D. Kitture, {\em Groups with finitely many centralizers}, Bull. Allahabad Math. Soc., \textbf{30}  (2015), 29--37.

\bibitem{rahul1}
R. Kulkarni, R. D. Kitture and V. S. Yadav, {\em $z$-classes in groups}, J. Algebra Appl., \textbf{15} (2016), 01--16.



\bibitem{pL95}
P. Lescot, {\em  Isoclinism classes and commutativity degrees of finite groups},  J. of Algebra, {\bf 177}
(1995), 847--869.

\bibitem{lescot}
P. Lescot, {\em  Central extensions and commutativity degree}, Comm. Algebra {\bf 29} (10) (2001), 4451–4460.

\bibitem{lewis}
M. L. Lewis, {\em Semi extraspecial groups}, Advances in algebra, Springer Proc. Math. Stat., \textbf{277}, Springer, Cham,  (2019), 219--237.


\bibitem{mann}
A. Mann, {\em Extreme elements of finite $p$-groups}, Rend. Sem. Mat. Univ. Padova,  \textbf{83} (1990), 45--54.



\bibitem{zarrin094}
M. Zarrin, {\em On element centralizers in finite groups}, Arch. Math.,  \textbf{93} (2009), 497--503.





\bibitem{non}
M. Zarrin, {\em On non-commuting sets and centralizers in infinite groups}, Bull. Aust. Math. Soc. \textbf{93} (2016), 42--46.


\end{thebibliography}
\end{document}